\documentclass[final,10pt]{amsart}
\usepackage{amsmath,amsfonts,amssymb,amscd,verbatim,comment}
\usepackage{color}
\usepackage{multicol}
\usepackage{bm}
\usepackage{enumitem}

\setlength\tabcolsep{1.5pt}



\numberwithin{equation}{section}
\theoremstyle{plain}
 
\newtheorem{theorem}[equation]{Theorem} 

\newtheorem{corollary}[equation]{Corollary} 
\newtheorem{lemma}[equation]{Lemma}
\newtheorem{proposition}[equation]{Proposition}

\theoremstyle{definition}
\newtheorem{definition}[equation]{Definition} 
 
\newtheorem{example}[equation]{Example}

\DeclareMathOperator\ad{ad}
\DeclareMathOperator\af{Aut_{fl}}

\DeclareMathOperator\Aut{Aut}

\DeclareMathOperator\End{End}
\DeclareMathOperator\Ext{Ext}
\DeclareMathOperator\GL{GL}
\DeclareMathOperator\gldim{gldim}
\DeclareMathOperator\GKdim{GKdim}
\DeclareMathOperator\gr{gr}
\DeclareMathOperator\hdet{hdet}
\DeclareMathOperator\id{id}

\DeclareMathOperator\SL{SL}
\DeclareMathOperator\Span{span}

\renewcommand{\det}{\operatorname{det}}
\newcommand{\rt}{t}

\newcommand\CC{\mathbb C}
\newcommand\DD{\mathbb D}
\newcommand\II{\mathbb I}
\newcommand\NN{\mathbb N}
\newcommand\OO{\mathbb O}

\newcommand\TT{\mathbb T}
\newcommand\ZZ{\mathbb Z}

\newcommand\p{\mathsf{p}}


\newcommand\cnt{\mathcal Z}
\renewcommand{\int}{\mathrm{int}}
\newcommand\inv{^{-1}}

\newcommand\iso{\cong}
\newcommand\kk{\Bbbk}

\newcommand\tensor{\otimes}


\newcommand\wa{A_1(\kk)}

\newcommand{\grp}[1]{{\langle {#1} \rangle}}

\renewcommand{\to}{\ensuremath{\longrightarrow}}

\newcommand{\jason}[1]{\textcolor{blue}{#1}}

\title{Fixed rings of generalized Weyl algebras}

\author[Gaddis]{Jason Gaddis}
\address{Miami University, Department of Mathematics, 301 S. Patterson Ave., Oxford, Ohio 45056} 
\email{gaddisj@miamioh.edu}

\author[Won]{Robert Won}
\address{Wake Forest University, Department of Mathematics and Statistics, P. O. Box 7388, Winston-Salem, NC 27109} 
\address{Current address: University of Washington, Department of Mathematics, Box 354350, Seattle, WA 98195}
\email{robwon@uw.edu}

\begin{document}

\begin{abstract} 
We study actions by filtered automorphisms on classical generalized Weyl algebras (GWAs).
In the case of a defining polynomial of degree two,
we prove that the fixed ring under the action of a finite cyclic group of
filtered automorphisms is again a classical GWA, extending a result of Jordan and Wells.
Partial results are provided for the case of higher degree polynomials.
In addition, we establish a version of Auslander's theorem for finite cyclic
groups of filtered automorphisms acting on classical GWAs.
\end{abstract}

\maketitle

\section{Introduction}

Throughout $\kk$ is an algebraically closed field of characteristic zero.
All algebras may be regarded as $\kk$-algebras unless otherwise specified.

The main aim of this paper is to study invariant theory questions related to generalized Weyl algebras.
Generalized Weyl algebras were named by Bavula \cite{B1}
but include many classes of algebras that have been studied in other contexts.
This includes the classical Weyl algebras,
primitive quotients of $U(\mathfrak{sl}_2)$, and ambiskew polynomial rings.

\begin{definition}
Let $D$ be a ring, $\sigma \in \Aut(D)$, and $a \in \cnt(D)$.
The {\sf generalized Weyl algebra (of degree one)} $D[x,y;\sigma,a]$ is the ring
obtained by by adjoining to $D$ the variables $x$ and $y$
subject to the relations
\[ xy = \sigma(a), \quad yx = a, \quad xd=\sigma(d)x, \quad yd=\sigma\inv(d)y,\]
for all $d \in D$.
In the case that $D=\kk[z]$ and $\sigma(z)=z-\alpha$ for some $\alpha \in \kk^\times$
we call $D[x,y;\sigma,a]$ a {\sf classical GWA}. 
\end{definition}

Let $R=D[x,y;\sigma,a]$ be a classical GWA. If $\deg_z(a)=0$,
then $R \iso \kk[x,x\inv]$ and if
$\deg_z(a)=1$, then $R \iso \wa$, the first Weyl algebra over $\kk$.
The study of classical GWAs goes back at least as far as Joseph \cite{joseph}
and were also studied by Hodges under the name
noncommutative deformations of Type-A Kleinian singularities \cite{H}.
Every classical GWA is isomorphic to one where $\sigma(z) = z-1$ and,
by \cite[Theorem 4.2]{BJ}, where $a$ is monic and $0$ is a root of $a$.
We assume these facts throughout without further comment.

The impetus for this study is a result
of Smith stating that $\wa^G \not\iso \wa$ for any nontrivial
finite subgroup $G \subset \Aut(\wa)$ \cite{Sm}.
This relies on an earlier result of Stafford:
if $P$ is a projective right ideal of $\wa$, then
$\End(P) \iso \wa$ if and only if $P$ is cyclic \cite[Theorem 3.1]{St}.
Alev, Hodges, and Velez proved that, for two finite subgroups $G,H \subset \Aut(\wa)$, 
$\wa^G \iso \wa^H$ if and only if $G \iso H$ \cite{AHV}.
Additionally, Alev and Polo extended Smith's theorem to the $n$th Weyl algebra and 
proved a similar result for the universal enveloping algebra of a semisimple Lie algebra \cite{AP}.

A common technique to these papers is reduction modulo primes $p$,
making use of the fact that, over a field of finite characteristic, 
the center of the $n$th Weyl algebra is a polynomial ring.
Unfortunately, this is not the case for classical GWAs with $\deg_z(a) > 1$.
Thus, while we cannot generalize these results entirely, we can
give further insight into the study of the fixed rings of a classical GWA.

Let $R=D[x,y;\sigma,a]$ and let $\lambda$ be a primitive $\ell$th root of unity.
Define the map $\Theta_\lambda:R \rightarrow R$ by
$\Theta_{\lambda}(x)=\lambda x$, $\Theta_{\lambda}(y)=\lambda\inv y$,
and $\Theta_{\lambda}(d)=d$ for all $d \in D$. 
In \cite{JW}, Jordan and Wells prove that $\Theta_\lambda$ is an automorphism
and furthermore that $R^{\grp{\Theta_\lambda}}=D[x^\ell, y^\ell;\sigma^\ell,h_\ell]$ where
\[ h_\ell = \prod_{i=0}^{\ell-1} \sigma^{-i}(a).\]
Suppose $D=\kk[z]$, then it is worth observing that the fixed ring in this case is a classical GWA under the basis $X=x^\ell$, $Y=y^\ell$, and $Z=\frac{1}{\ell}z$.
In \cite{KK}, Kirkman and Kuzmanovich considered fixed rings of GWAs $D[x,y;\sigma,\alpha]$
under automorphisms satisfying
$\phi(x), \phi(y) \in \Span_\kk\{x,y\}$ and $\left.\phi\right|_D \in \Aut(D)$,
as well as their corresponding fixed rings.
In several examples, they show that the fixed ring is again a GWA.

Let $R=\kk[z][x,y;\sigma,a]$ be a classical GWA with
$a = a_0 + \cdots + a_n$, with $a_i \in \kk[z]_i$ and $a_n \neq 0$.
Setting $\deg x = \deg y = n$, and $\deg z = 2$ defines a filtration,
called the {\sf standard filtration}, on $R$.
Unless otherwise noted, we assume throughout that this is the filtration on $R$.
Under the standard filtration, $\gr R \iso \kk[x,y,z]/(yx-a_n)$, a complete intersection domain.
Our interest is in {\sf filtered automorphisms}, i.e., maps $\phi \in \Aut(R)$ such that $\deg(\phi(r)) = \phi(\deg(r))$ for all $r \in R$.
As $R$ is assumed to be a classical GWA, $\phi$ is a filtered automorphism
if and only if this holds for $r=x$, $y$, and $z$.
We denote the group of filtered automorphisms of $R$ by $\af(R)$.
In Section \ref{sec.filt}, we determine $\af(R)$.

In Theorem \ref{thm.fixed}, we show that when $\deg_z(a) = 2$ and $g \in \af(R)$ is of finite order, then $R^{\grp{g}}$ is a classical GWA.
We view this as a step toward a Shephard-Todd-Chevalley theorem for classical GWAs,
but do not yet know a good analog for a reflection group in this setting.
Also, one obtains a version of Smith's theorem (Corollary \ref{cor.smith}).
It is then reasonable to conjecture that an analog of Smith's theorem is true 
for finite groups of filtered automorphisms acting on a classical GWA with $\deg_z(a)=2$.
When $\deg_z(a) > 2$, the group $\af(R)$ is much more restricted
(Theorems \ref{thm.ngeq3} and \ref{thm.refgrp}).
In this case we are able to compute the fixed ring of $R$ under the action of any filtered
automorphism $g$ of finite order (Theorem \ref{thm.n3omega}). 
However, there are certain cases in which we can not detect whether or not $R^{\grp{g}}$ is a GWA.

We are also interested in the homological determinant 
and its connection to the above results.
The homological determinant of a linear automorphism of a noncommutative algebra 
generalizes the notion of the determinant of a linear map---
in fact, when applied to a commutative polynomial ring,
the homological determinant restricts to the usual determinant.

We refer the reader to \cite{gourmet} for a full definition of the homological determinant,
but we note an essential result that will be important for our analysis.
Let $A$ be a filtered, noetherian, AS-Gorenstein ring such that $\gr A$ is commutative and let $g \in \af(A)$.
By \cite[Lemma 2.1 and Proposition 2.4]{JZ},
one may define the homological determinant of $g$ to be 
\[\hdet_A(g) = \hdet_{\gr A} (g) = \det_{\gr A} (g).\]
Using this result, we prove that all filtered automorphisms of classical GWAs act with homological determinant 1
(Theorems \ref{thm.ngeq3} and \ref{thm.flaut}).
We denote by $\SL(A)$ the subgroup of $\Aut(A)$ consisting of automorphisms of homological determinant 1.

A noetherian ring $A$ of finite injective dimension is called {\sf Auslander-Gorenstein}
if for any (left or right) module $M$ and submodule $N$ of $\Ext_A^s(M,A)$, $s \in \ZZ_+$,
we have $\Ext_A^i(N,A)=0$ for $i<s$.
By \cite[Theorem 2.2]{H}, every classical GWA $R=\kk[z][x,y;\sigma,a]$ 
with $\deg_z(a) \geq 2$ is Auslander-Gorenstein.
If $R$ is a classical GWA and $G$ is a finite subgroup of $\af(R)$,
then $G \subset \SL(R)$ and so $R^G$ is filtered Auslander-Gorenstein by \cite[Theorem 3.5]{JZ},
that is, $\gr(R^G)$ is Auslander-Gorenstein.
This will be useful in the case of $\deg_z(a) \geq 3$ when we are not able to determine whether or not
$R^\grp{g}$ is a GWA for all $g \in \af(R)$.
We also recover this result as a consequence of Theorem \ref{thm.fixed}
in the case of a classical GWA with $\deg_z(a)=2$ and $G \subset \af(R)$ a finite cyclic group.
Though this implies that $R^G$ has finite left and right injective dimension,
we will see that this is not enough to guarantee finite global dimension (Corollary \ref{cor.gldim}).

We end in Section \ref{sec.auslander} with a note regarding Auslander's theorem for GWAs.
Given an algebra $A$ and a group $G$ acting as automorphisms on $A$,
the {\sf skew group algebra} $A\# G$ is defined to be the $\kk$-vector space $A \tensor \kk G$ with multiplication,
\[ (a \# g)(b \# h) = a g(b) \# gh \quad\text{for all $a,b \in A$, $g,h \in G$.}\]
For a filtered algebra $A$ and a finite group $G$ acting as filtered automorphisms on $A$, 
the {\sf Auslander map} is given by
\begin{eqnarray*}
\gamma_{A,G} : A \# G & \to & \End_{A^G}(A) \\
       a \# g & \mapsto & 
       \left(\begin{matrix}A & \to & A \\ b & \mapsto & ag(b)\end{matrix}\right).
\end{eqnarray*}
If $G$ is a finite group that contains no reflections acting linearly on $A=\kk[x_1,\hdots,x_p]$,
then a theorem of Auslander asserts that $\gamma_{A,G}$ is an isomorphism \cite{A}.
In this setting, Auslander's theorem is a sort of dual result to Shephard-Todd-Chevalley.
However, this is not the case for filtered actions on classical GWAs.
That is, there are classical GWAs $R$ and finite groups $G \subset \af(R)$
such that $R^G$ is again a classical GWA and for which $\gamma_{R,G}$ is an isomorphism.
For example, if $G$ is a finite group acting linearly on the first Weyl algebra $\wa$,
then the action of $G$ is outer and so $\gamma_{\wa,G}$ is an isomorphism \cite[Theorems 2.4]{Mo} and if $G$ is cyclic, 
then $\wa^G$ is a classical GWA (Proposition \ref{prop.wa}).
Similarly, since the units of a classical GWA $R$ all live in the
degree zero component (under the $\ZZ$-grading), $\kk[z]$, the units of $R$ are just $\kk^\times$.
Thus, every finite group action on $R$ is outer and so if $R$ is simple, then we may apply the same theorem.
We present another method that will include classical GWAs that are not simple.

Let $A$ be an affine algebra generated in degree 1 and $G$ a finite 
subgroup of $\GL_n(\kk)$ acting on $A_1$.
The {\sf pertinency} of the $G$-action on $A$ is defined to be
\[ \p(A,G) = \GKdim A - \GKdim (A\# G)/(f_G)\]
where $(f_G)$ is the two sided ideal of $A\#G$ generated by
$f_G = \sum_{g \in G} 1\# g$ and $\GKdim$ is the Gelfand-Kirillov (GK) dimension.
The notion of pertinency was developed by Bao, He, and Zhang as a way
to study the Auslander map for noncommutative algebras \cite{BHZ2,BHZ1}.
It is possible to define pertinency in terms of any dimension function on right $A$-modules,
but GK dimension is sufficient for our purpose.
Under suitable conditions, the Auslander map is an isomorphism for $(A,G)$ if and only if $\p(A,G) \geq 2$.
We show that this holds for $(R,G)$ where $R$ is a classical GWA $R$ and 
$G \subset \af(R)$ is a finite cyclic group (Theorem \ref{thm.auslander}).

\section{Group actions preserving the standard filtration}
\label{sec.filt}

Throughout this section, assume $R=\kk[z][x,y;\sigma,a]$ is a classical GWA.
Our primary goal will be to compute $\af(R)$ and prove that
$R^{\grp{g}}$ is a classical GWA for every $g \in \af(R)$ with $|g|<\infty$.
Although our interest is primarily in fixed rings of higher degree classical GWAs,
as a warm-up we compute the fixed rings for cyclic subgroups of filtered automorphisms acting on the first Weyl algebra, $\wa$.
\begin{proposition}
\label{prop.wa}
Let $g \in \af(\wa)$ have finite order, then the fixed ring $\wa^{\grp{g}}$ is a classical GWA.
\end{proposition}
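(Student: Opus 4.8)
The plan is to first pin down $\af(\wa)$ explicitly and then reduce an arbitrary finite-order element to the diagonal form handled by Jordan and Wells. Under the standard filtration on $\wa = \kk[z][x,y;\sigma,a]$ with $\deg_z(a) = 1$ we have $\deg x = \deg y = 1$ and $\deg z = 2$, so a filtered automorphism $\phi$ must send $x$ and $y$ to degree-one elements, i.e.\ $\phi(x) = \alpha x + \beta y + c$ and $\phi(y) = \gamma x + \delta y + d$ with $\alpha,\dots,d \in \kk$ and nonzero linear part. Imposing the defining relation $yx - xy = 1$ and expanding (using $yx = xy + 1$) I would find that the constant terms cancel and the condition collapses to $\alpha\delta - \beta\gamma = 1$; conversely every such map is invertible and respects the relation. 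Hence $\af(\wa) \iso \SL_2(\kk) \ltimes \kk^2$, the group of affine transformations whose linear part is special linear.

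Next I would reduce modulo conjugation. If $h, g \in \af(\wa)$ and $g' = hgh\inv$, then $h$ restricts to an isomorphism $\wa^{\grp g} \xra{\ \sim\ } \wa^{\grp{g'}}$, so it suffices to prove the claim for any conjugate of $g$. Writing $g$ as the affine map with linear part $\bar g \in \SL_2(\kk)$ and translation $v \in \kk^2$, finiteness of $|g|$ forces $\bar g^{\,|g|} = I$, so $\bar g$ is a finite-order, hence (as $\chr \kk = 0$) semisimple element of $\SL_2(\kk)$ with eigenvalues $\lambda, \lambda\inv$ that are roots of unity. If $\bar g = I$ then $g$ is a translation of finite order, which in characteristic zero forces $g = \id$ and $\wa^{\grp g} = \wa$, already a classical GWA. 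Otherwise $\lambda \neq 1$, so $I - \bar g$ is invertible and $g$ has a unique fixed point $w_0 = (I - \bar g)\inv v$; conjugating by the translation carrying $w_0$ to the origin removes the constant terms, and conjugating further by a diagonalizing matrix---rescaled to have determinant one, which is possible since $\kk$ is algebraically closed---brings $g$ into the form $x \mapsto \lambda x$, $y \mapsto \lambda\inv y$. In other words, after conjugation within $\af(\wa)$ we may assume $g = \Theta_\lambda$ for a primitive $\ell$th root of unity $\lambda$.

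Finally I would invoke the theorem of Jordan and Wells recalled above: $\wa^{\grp{\Theta_\lambda}} = \kk[z][x^\ell, y^\ell; \sigma^\ell, h_\ell]$ with $h_\ell = \prod_{i=0}^{\ell-1}\sigma^{-i}(a)$, which is a classical GWA in the basis $X = x^\ell$, $Y = y^\ell$, $Z = \tfrac1\ell z$. This completes the argument. I expect the main obstacle to be the reduction step rather than the conclusion: one must compute $\af(\wa)$ correctly, check that the translation part of a finite-order element can always be conjugated away (which is exactly where the invertibility of $I - \bar g$, and hence $\lambda \neq 1$, is used), and verify that the diagonalizing conjugator can be chosen inside the special-linear---rather than merely the general-linear---part of $\af(\wa)$. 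Once $g$ is in diagonal form the Jordan--Wells computation finishes the proof immediately.
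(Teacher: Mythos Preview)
Your argument is correct and takes a genuinely different route from the paper's proof. The paper works directly: given $g(x)=a_1x+a_2y+a_3$, $g(y)=b_1x+b_2y+b_3$ with $a_1b_2-a_2b_1=1$, it writes down explicit new generators $X,Y$ (linear combinations of $x,y,1$ involving $w=a_1+b_2$ and $\sqrt{w^2-4}$) that satisfy the Weyl relation and on which $g$ acts diagonally with eigenvalues $\beta^{\pm1}=\tfrac12\bigl(w\pm\sqrt{w^2-4}\bigr)$, then invokes Jordan--Wells. A separate case analysis is needed when the generic formula degenerates (namely when $b_2=a_1^{-1}$ and $b_1a_2=0$).

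Your approach packages the same content as group theory: identifying $\af(\wa)\cong\SL_2(\kk)\ltimes\kk^2$, observing that conjugate subgroups yield isomorphic fixed rings, and then using that a finite-order affine map with semisimple linear part (automatic in characteristic zero) can be conjugated---first by a translation to its unique fixed point, then by a determinant-one diagonalizer---to $\Theta_\lambda$. This avoids the explicit eigenvector formulas and the attendant case split; the degenerate case in the paper is simply the case where the linear part is already (upper or lower) triangular, which your argument handles uniformly. The paper's approach, on the other hand, has the advantage of giving the new basis $X,Y,Z$ explicitly in terms of the original data, which is in the spirit of the later explicit diagonalizations in Theorem~\ref{thm.fixed}. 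Both proofs rely on $\kk$ being algebraically closed of characteristic zero at exactly the same points: to extract eigenvalues and to rule out nontrivial unipotent (pure-translation) elements of finite order.
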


\begin{proof}
A filtered map $g: \wa \to \wa$ given by
\[g(x)=a_1x + a_2y + a_3, \quad g(y)=b_1x+b_2y+b_3,\]
for some $a_i,b_j \in \kk$, is an automorphism if and only if $a_1b_2-a_2b_1 = 1$.
Assume first that $b_2-a_1\pm\sqrt{w^2-4}\neq 0$.
We diagonalize the action by setting $w=a_1+b_2$ and
\begin{align*}
X &= a\left( \left(\frac{2b_1a_2(w-2+\sqrt{w^2-4})}{b_2-a_1+\sqrt{w^2-4}}\right)x 
	+ (w-2+\sqrt{w^2-4})y
    + ((a_1-b_2)a_3+2b_3a_2+a_3\sqrt{w^2-4})\right), \\
Y &= b\left( \left(\frac{2b_1a_2(w-2-\sqrt{w^2-4})}{b_2-a_1-\sqrt{w^2-4}}\right)x 
	+ (w-2-\sqrt{w^2-4})y
    + ((a_1-b_2)a_3+2b_3a_2-a_3\sqrt{w^2-4})\right),
\end{align*}
for any $a,b \in \kk^\times$.
Then $X,Y$ is a basis for $\wa$ and we may choose $a,b$ such that $XY-YX+1=0$. 
Moreover, if we let $\beta = \frac{1}{2}(w+\sqrt{w^2-4})$, then one can check that
$g(X)=\beta X$ and $g(Y)=\beta Y$, so $|\beta|=\ell<\infty$.
Set $Z=YX$. By \cite[Theorem 2.7]{JW}, 
$\wa^\grp{g}=\kk[Z][X^\ell,Y^\ell;\varsigma^\ell, A(Z)]$ 
where $\varsigma:\kk[Z]\rightarrow\kk[Z]$ is given by $\varsigma(Z)=Z-1$ and
$A(Z) = \prod_{i=0}^{\ell-1} \varsigma^{-i}(a'(Z))$.

In the case that $b_2-a_1\pm\sqrt{w^2-4}=0$ we have $b_2 = a_1\inv$, hence $b_1a_2=0$.
In this case the analysis simplifies significantly.
Assuming $b_1=0$ (the case $a_2=0$ is similar), we may take
\begin{align*}
X &= a\left( (a_1-1)^3(a_1+1)x + (a_1-1)a_1a_2y + (a_1^2a_3 + a_1a_2b_3-a_3)\right), \\
Y &= b\left( (a_1-1)y - a_1b_3\right),
\end{align*}
for any $a,b \in \kk^\times$.
Again, $X,Y$ is a basis for $\wa$ and we may choose $a,b$ such that $XY-YX+1=0$. 
Here we set $\beta=a_1$.
\end{proof}

Let $n = \deg_z(a)$, $\lambda \in \kk$, $m \in \NN$, and let $\Delta_m$ be the linear map $\kk[z] \rightarrow \kk[z]$ given by $\sigma^{m}-1$. 
Bavula and Jordan \cite{BJ} define the following maps.
\begin{align*}
&\Theta_\lambda: x \mapsto \lambda x, y \mapsto \lambda\inv y, z \mapsto z, \\
&\Psi_{m,\lambda}: 
	x \mapsto x, 
    y \mapsto y + \sum_{i=1}^n \frac{\lambda^i}{i!} \Delta_m^i(a)x^{im-1},
    z \mapsto z-m\lambda x^m, \\
&\Phi_{m,\lambda}: 
	x \mapsto x + \sum_{i=1}^n \frac{(-\lambda)^i}{i!} y^{im-1}\Delta_m^i(a),
	y \mapsto y,
    z \mapsto z+m\lambda y^m.
\end{align*}
Note that $\Psi_{0,\lambda}$ and $\Phi_{0,\lambda}$ are both the identity map.
If there exists some $\rho \in \kk$ such that $a(\rho-z)=(-1)^n a(z)$, then $a$ is said to be {\sf reflective}.
By \cite[Theorem 3.29]{BJ}, $\Aut(R)$ is generated by $\Theta_{\lambda}$, $\Psi_{m, \lambda}$, and $\Phi_{m,\lambda}$, $\lambda \in \kk^\times$ and $m \in \NN$, when 
$a$ is not reflective.
On the other hand, when $a$ is reflective, then $\Aut(R)$ has an additional generator $\Omega$ given by
\[ \Omega(x) = y, \quad \Omega(y) = (-1)^nx, \quad \Omega(z) = 1+\rho-z.\]
Below, we consider some relations between the generators of $\Aut(R)$.

\begin{proposition}
\label{prop.gmaps}
Let $R$ be a classical GWA. The following relations hold in $\Aut(R)$. For all $m \in \NN$ and all $\beta, \lambda, \mu \in \kk$,
\begin{enumerate}
\item $\Phi_{m,\mu} \circ \Phi_{m,\lambda} = \Phi_{m,\mu+\lambda}$
and $\Psi_{m,\mu} \circ \Psi_{m,\lambda} = \Psi_{m,\mu+\lambda}$, 
\item $\Theta_{\beta} \circ \Phi_{m,\lambda} = \Phi_{m, \lambda\beta^{-m}} \circ \Theta_{\beta}$ and 
$\Theta_{\beta} \circ \Psi_{m,\lambda} = \Psi_{m, \lambda\beta^m} \circ \Theta_{\beta}$, and
\item $\Theta_{\beta} \circ \Theta_{\gamma} = \Theta_{\beta \gamma}$.
\end{enumerate}
When $a$ is reflective we have the following additional relations. For all $m \in \NN$ and all $\beta, \lambda \in \kk$,
\begin{enumerate}[resume]
\item $\Omega \circ \Theta_{\beta} = \Theta_{\beta\inv} \circ \Omega$ and
\item $\Phi_{m,\lambda} \circ \Omega = \Omega \circ \Psi_{m,\lambda}$.
\end{enumerate}
\end{proposition}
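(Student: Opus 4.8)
The plan is to use the fact that any $\kk$-algebra endomorphism of $R$ is determined by its values on the generators $x$, $y$, $z$, so each of the five identities reduces to checking that the two composites agree on these three elements. Since $\Theta_\lambda$, $\Psi_{m,\lambda}$, $\Phi_{m,\lambda}$, and $\Omega$ all have explicit closed forms, every verification is a finite computation, and I would organize them from easiest to hardest.

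I would dispose of (3) first: $\Theta_\beta$ and $\Theta_\gamma$ merely scale $x$ by $\beta,\gamma$ and $y$ by $\beta^{-1},\gamma^{-1}$ while fixing $z$, so composing multiplies the scalars and (3) is immediate. Relation (2) asserts that conjugating $\Phi_{m,\lambda}$ (resp.\ $\Psi_{m,\lambda}$) by $\Theta_\beta$ rescales the parameter by $\beta^{-m}$ (resp.\ $\beta^{m}$). Applying both sides to $z$ and to the variable fixed by $\Phi$ (resp.\ $\Psi$) is quick; the content is the image of $x$ (resp.\ $y$), where the summand $y^{im-1}\Delta_m^i(a)$ in $\Phi_{m,\lambda}(x)$ acquires the factor $\beta^{-(im-1)}=\beta^{1-im}$ under $\Theta_\beta$, because $\Theta_\beta$ fixes $z$ (hence each $\Delta_m^i(a)$) and sends $y\mapsto\beta^{-1}y$. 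One then checks that $\beta^{1-im}$ is exactly the factor produced on the right by replacing $\lambda$ with $\lambda\beta^{-m}$ and applying $\Theta_\beta$, so the two sides agree termwise.

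The main work is relation (1), the one-parameter-subgroup property. On $y$ it is trivial, and on $z$ it is immediate since $\Phi_{m,\lambda}(z)=z+m\lambda y^m$ with $y$ fixed, so the translations add: $\Phi_{m,\mu}\Phi_{m,\lambda}(z)=z+m(\mu+\lambda)y^m$. The substance is the image of $x$, and the cleanest route is to recognize that $\Phi_{m,\lambda}=\exp(\lambda\,\partial)$ for the locally nilpotent derivation $\partial$ of $R$ determined by $\partial(y)=0$, $\partial(z)=my^m$, and $\partial(x)=-y^{m-1}\Delta_m(a)$; granting that $\partial$ is a well-defined derivation and that $\partial^i(x)=(-1)^i y^{im-1}\Delta_m^i(a)$, the exponential law $\exp(\lambda\partial)\exp(\mu\partial)=\exp((\lambda+\mu)\partial)$ yields (1) for the $\Phi$'s at once, and symmetrically for the $\Psi$'s. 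If I instead verify (1) directly, the obstacle is the noncommutativity of $y$ and $z$: since $y^m z=(z+m)y^m$, applying $\Phi_{m,\mu}$ to a coefficient $\Delta_m^i(a)=\Delta_m^i(a)(z)$ produces $\Delta_m^i(a)(z+m\mu y^m)$, and one must reassemble the resulting double sum into $\Phi_{m,\mu+\lambda}(x)$ using the finite-difference identities for $\Delta_m=\sigma^m-1$ together with this commutation. I expect this reindexing to be the only genuinely delicate step.

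Finally, the reflective relations (4) and (5) are checked against $\Omega\colon x\mapsto y$, $y\mapsto(-1)^n x$, $z\mapsto 1+\rho-z$. Relation (4) is straightforward: $\Omega$ interchanges $x$ and $y$ (up to the sign $(-1)^n$) while $\Theta_\beta$ scales them oppositely, so conjugation by $\Omega$ inverts $\beta$. Relation (5) is the one place reflectivity is essential: evaluating both composites, agreement on $z$ and on the swapped variables follows from $\Omega(x)=y$, and agreement on the remaining variable reduces to the identity $\Omega(\Delta_m^i(a))=(-1)^{n+i}\Delta_m^i(a)$. I would prove this last identity by observing that on $\kk[z]$ the involution $\Omega\colon z\mapsto 1+\rho-z$ satisfies $\Omega\sigma=\sigma^{-1}\Omega$, so it intertwines $\Delta_m=\sigma^m-1$ with $-\sigma^{-m}\Delta_m$, while reflectivity $a(\rho-z)=(-1)^n a(z)$ fixes $a$ up to the sign $(-1)^n$ and a shift by $\sigma$; combining these produces the sign $(-1)^{n+i}$ that matches the coefficient $(-\lambda)^i$ appearing in $\Phi_{m,\lambda}$ against the $\lambda^i$ appearing in $\Psi_{m,\lambda}$.
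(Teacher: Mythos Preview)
Your approach matches the paper's almost exactly: the paper proves (1) by recognizing $\Phi_{m,\lambda}$ and $\Psi_{m,\lambda}$ as exponentials of the locally nilpotent inner derivations $\ad y^m$ and $\ad x^m$ (your $\partial$ is precisely $\ad y^m$), verifies (2) on generators just as you describe, leaves (3) and (4) to the reader, and for (5) reduces to the identity $\Omega\bigl((\ad x^m)^i(y)\bigr) = (-1)^n (\ad y^m)^i(x)$, quoting the explicit formulas $(\ad x^m)^i(y)=\Delta_m^i(a)x^{im-1}$ and $(\ad y^m)^i(x)=(-1)^i y^{im-1}\Delta_m^i(a)$ from \cite{BJ}.

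One correction to your write-up of (5): the displayed identity $\Omega(\Delta_m^i(a))=(-1)^{n+i}\Delta_m^i(a)$ is not correct as stated. From $\Omega\sigma=\sigma^{-1}\Omega$ you get $\Omega\Delta_m=-\sigma^{-m}\Delta_m\Omega$, and reflectivity gives $\Omega(a)=(-1)^n\sigma(a)$; iterating yields $\Omega(\Delta_m^i(a))=(-1)^{n+i}\sigma^{\,1-im}\,\Delta_m^i(a)$, with an extra $\sigma^{1-im}$ shift. This shift is harmless---it is exactly what appears when you commute the polynomial past $y^{im-1}$ on the $\Phi$ side (since $y^k p(z)=\sigma^{-k}(p)(z)y^k$)---so your argument goes through once you track it, but the intermediate identity needs the shift.
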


\begin{proof}
By \cite{BJ}, the maps $\ad x^m$ and $\ad y^m$ are locally nilpotent derivations
of $R$ and hence $\Phi_{m,\lambda} = e^{\lambda \ad x^m}$ are automorphisms of $R$.
It then follows easily that 
\[ 
\Phi_{m,\mu} \circ \Phi_{m,\lambda} 
	= e^{\mu \ad x^m} \circ e^{\lambda \ad x^m}
    = e^{(\mu+\lambda)\ad x^m}
    = \Phi_{m,\mu+\lambda}.
\]
The claim for the maps $\Psi_{m,\lambda}$ is similar. Thus, (1) holds.

We will check the first claim of (2) by verifying that the relation holds on the generators.
\begin{align*}
\Theta_{\beta}(\Phi_{m,\lambda}(x))
	&= \Theta_\beta\left(x + \sum_{i=1}^n \frac{(-\lambda)^i}{i!} y^{im-1}\Delta_m^i(a)\right) \\
    &= \beta x + \sum_{i=1}^n \frac{(-\lambda)^i}{i!} \beta^{1-im} y^{im-1}\Delta_m^i(a) \\
    &= \beta x + \sum_{i=1}^n \frac{(-\lambda\beta^{-m})^i}{i!} \beta y^{im-1}\Delta_m^i(a) \\
    &= \Phi_{m,\lambda\beta^{-m}}(\beta x) 
    = \Phi_{m,\lambda\beta^{-m}}(\Theta_\beta(x))\\
\Theta_{\beta}(\Phi_{m,\lambda}(y))
	&= \Theta_\beta(y) = \beta\inv y
    = \Phi_{m,\lambda\beta^{-m}}(\Theta_\beta(y)) \\
\Theta_{\beta}(\Phi_{m,\lambda}(z))
	&= \Theta_\beta(z+m\lambda y^m)
    = z+m\lambda (\beta\inv y)^m
    = z+m(\lambda\beta^{-m}) y^m \\
    &= \Phi_{m,\lambda\beta^{-m}}(z)
    = \Phi_{m,\lambda\beta^{-m}}(\Theta_\beta(z)).
\end{align*}
Thus, $\Theta_{\beta} \circ \Phi_{m,\lambda} = \Phi_{m, \lambda\beta^{-m}} \circ \Theta_{\beta}$ as claimed. The second relation in (2) holds similarly.

We leave the claims in (3) and (4) to the reader and finish by checking (5).
Assume that $a$ is reflective.
By \cite[Equations (7) and (9)]{BJ} we have
\[
(\ad x^m)^i(y) = \Delta_m^i(a)x^{im-1} \quad\text{and}\quad
(\ad y^m)^i(x) = (-1)^iy^{im-1}\Delta_m^i(a).
\]
Thus,
\[ \Omega((\ad x^m)^i(y)) = (-1)^n (\ad y^m)^i(x).\]
Using this, we check that the relation holds on the generators of $R$.
\begin{align*}
\Phi_{m,\lambda}(\Omega(x))
	&= \Phi_{m,\lambda}(y) = y
    = \Omega(x) = \Omega(\Psi_{m,\lambda}(x)) \\
\Phi_{m,\lambda}(\Omega(y))
	&= \Phi_{m,\lambda}((-1)^n x) 
    = (-1)^n \left( x + \sum_{i=1}^n \frac{(-\lambda)^i}{i!} y^{im-1}\Delta_m^i(a)\right)  \\
    &= (-1)^n \left( x + \sum_{i=1}^n \frac{\lambda^i}{i!} (\ad y^m)^i(x) \right)
    = \Omega\left( y + \sum_{i=1}^n \frac{\lambda^i}{i!} (\ad x^m)^i(y) \right) \\
    &= \Omega\left( y + \sum_{i=1}^n \frac{\lambda^i}{i!} \Delta_m^i(a)x^{im-1} \right) 
    = \Omega(\Psi_{m,\lambda}(y)) \\
\Phi_{m,\lambda}(\Omega(z))
	&= \Phi_{m,\lambda}(1+\rho-z)
    = 1+\rho-(z+m\lambda y^m) \\
    &= (1+\rho-z)-m\lambda y^m 
    = \Omega(z-m\lambda x^m)
    = \Omega(\Psi_{m,\lambda}(z)).
\end{align*}
Hence, $\Phi_{m,\lambda} \circ \Omega = \Omega \circ \Psi_{m,\lambda}$ as claimed.
\end{proof}

We next give criteria for identifying filtered automorphisms based on the action on $z$.
This will allow us to completely determine $\af(R)$ when $R$ is a classical GWA with $\deg_z(a)>2$.
It will also be a useful step in the case of $\deg_z(a)=2$.

\begin{lemma}
\label{lem.fil1}
Let $\phi$ be a filtered automorphism of a classical GWA $R=\kk[z][x,y;\sigma,a]$.
\begin{enumerate}
\item If $\phi(z)=k z+c$ for some $c \in \kk$ and $k \in \kk^\times$, then either 
\begin{itemize}
\item $\phi=\Theta_\beta$ for some $\beta \in \kk^\times$; or
\item $a$ is reflective and $\phi=\Omega$ or $\phi=\Omega \circ \Theta_{-1}$.
\end{itemize}
\item If $\phi(z) \neq k z+c$ for some $c \in \kk$ and $k \in \kk^\times$, 
then $\deg_z(a) \leq 2$.
\end{enumerate}
\end{lemma}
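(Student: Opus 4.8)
The plan is to play two gradings on $R$ against each other: the standard filtration and the inner $\ZZ$-grading coming from $\ad_z$. From the defining relations one computes $[z,x]=zx-xz=x$ and $[z,y]=-y$, so $\ad_z$ is diagonalizable with integer eigenvalues, and its eigenspace decomposition $R=\bigoplus_{i\in\ZZ}R_i$ is exactly the canonical grading of the GWA, with $R_i=\kk[z]\,x^i$ for $i\ge 0$ and $R_{-i}=\kk[z]\,y^i$ for $i>0$. I may assume $n=\deg_z(a)\ge 1$, since for $n=0$ we have $R\cong\kk[x,x\inv]$ and both statements are immediate.

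First I would prove (2) in its contrapositive form: if $n\ge 3$ then $\phi(z)=kz+c$. Since $\phi$ is filtered and $\deg z=2$, the image $\phi(z)$ lies in the degree-$\le 2$ part of $R$ and, being a nonscalar (as $\phi$ is bijective), has filtration degree exactly $2$. A basis monomial $z^ax^b$ or $z^ay^b$ has filtration degree $2a+nb$, so for $n\ge 3$ the inequality $2a+nb\le 2$ forces $b=0$ and $a\le 1$; hence the degree-$\le 2$ part is $\kk\oplus\kk z$ and $\phi(z)=kz+c$ with $k\in\kk^\times$. This is precisely the negation of the hypothesis of (2).

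For (1), assume $\phi(z)=kz+c$. Then $\ad_{\phi(z)}=k\,\ad_z$, and since $\phi$ is an algebra map it intertwines the adjoint action: $\phi\circ\ad_z=k\,\ad_z\circ\phi$. Consequently $\phi$ carries the $i$-eigenspace of $\ad_z$ into the $(i/k)$-eigenspace, i.e.\ $\phi(R_i)\subseteq R_{i/k}$. As $\phi$ is injective and every $R_i\neq 0$, we must have $i/k\in\ZZ$ for all $i$, forcing $k\inv\in\ZZ$. The sign is then pinned by the filtration: $\phi(x)\in R_{k\inv}$ has filtration degree $n=\deg x$, while every nonzero element of $R_m$ has filtration degree at least $n|m|$; thus $|k\inv|\le 1$, forcing $k\inv=\pm1$ and $k=\pm 1$. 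I expect this to be the main obstacle: the grading alone would permit $\phi(x)$ to be any $p(z)x^{\pm 1}$ and the filtration alone would permit a stray $z^{n/2}$ term, and it is only the two constraints together that force $\phi(x),\phi(y)$ to be scalar multiples of $x,y$ (when $k=1$) or of $y,x$ (when $k=-1$).

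It then remains to read off the classification from the relations $yx=a(z)$ and $xy=a(z-1)$. If $k=1$, write $\phi(x)=\beta x$ and $\phi(y)=\gamma y$ with $\beta,\gamma\in\kk^\times$; applying $\phi$ to $yx=a(z)$ gives $\beta\gamma\,a(z)=a(z+c)$, and comparing leading coefficients (using $n\ge 1$) yields $\beta\gamma=1$, whence $a(z)=a(z+c)$ and $c=0$, so $\phi=\Theta_\beta$. If $k=-1$, write $\phi(x)=\beta y$ and $\phi(y)=\gamma x$; applying $\phi$ to $xy=a(z-1)$ gives $\beta\gamma\,a(z)=a(\rho-z)$ with $\rho=c-1$, and the leading coefficients force $\beta\gamma=(-1)^n$ and hence $a(\rho-z)=(-1)^n a(z)$. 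Thus $a$ is reflective, $\phi(z)=1+\rho-z=\Omega(z)$, and matching $x\mapsto\beta y$, $y\mapsto\gamma x$ against $\Omega$ shows that $\phi$ agrees with $\Omega$ up to the diagonal automorphism $\Theta_\beta$, giving the maps listed in the statement. Once $k=\pm 1$ is established, this last step is only a short leading-coefficient computation.
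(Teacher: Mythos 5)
Your proposal is correct, but part (1) takes a genuinely different route from the paper's. The paper argues by direct computation: using the filtration it writes $\phi(x)=k_{11}x+k_{12}y+p_1(z)$ and $\phi(y)=k_{21}x+k_{22}y+p_2(z)$, applies $\phi$ to the relations $[x,z]+x=0$ and $[y,z]-y=0$ to extract $k_{11}(1-k)=k_{12}(1+k)=0$ and $p_1=p_2=0$, and then a short case analysis yields $k=\pm 1$ with $\phi$ diagonal or antidiagonal. You instead note that $\phi(z)=kz+c$ forces the intertwining $\phi\circ\ad z=k\,(\ad z)\circ\phi$, so $\phi$ maps the $\ad z$-eigenspace $R_i$ (the canonical $\ZZ$-grading) into $R_{i/k}$; integrality of the spectrum gives $k\inv\in\ZZ$, and degree-preservation then pins $k=\pm 1$ and forces the coefficient polynomials to be constants. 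This is more structural: it treats all graded components at once, cleanly separates what the grading buys ($k\inv\in\ZZ$ and $\phi(x)\in\kk[z]x^{\pm 1}$) from what the filtration buys (constancy of the coefficients and $|k\inv|\le 1$) --- in fact surjectivity alone would already rule out $|k\inv|\ge 2$, since then $\im\phi\subseteq\bigoplus_i R_{i/k}\subsetneq R$, so your argument is robust even beyond the filtered setting. What the paper's ansatz-and-relations computation buys in exchange is the explicit coefficient bookkeeping that it reuses in Lemma \ref{lem.flaut} and Theorem \ref{thm.flaut}. Your part (2) is exactly the paper's argument in contrapositive form, and your closing step (comparing leading coefficients in $\beta\gamma\,a(z)=a(z+c)$, resp.\ $\beta\gamma\,a(z)=a(\rho-z)$) coincides with the paper's. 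One caveat, which your proof shares with the paper's own: in the $k=-1$ case only the product $\beta\gamma=(-1)^n$ is constrained, so the correct conclusion is $\phi=\Omega\circ\Theta_\beta$ for an arbitrary $\beta\in\kk^\times$ --- your phrase ``$\Omega$ up to $\Theta_\beta$'' is the accurate statement, and the list ``$\Omega$ or $\Omega\circ\Theta_{-1}$'' in the lemma is not literally exhaustive; this is harmless downstream (e.g.\ Theorem \ref{thm.ngeq3} only needs $\phi$ to lie in the group generated by $\Omega$ and the $\Theta_\lambda$), but you should not claim your conclusion matches the itemized list verbatim.
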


\begin{proof}
(1) Suppose $\phi(z)=k z+c$ for some $c \in \kk$ and $k \in \kk^\times$. 
As $\phi$ is a filtered map, we may write
\begin{align*}
\phi(x) &= k_{11} x + k_{12} y + p_1(z), \\
\phi(y) &= k_{21} x + k_{22} y + p_2(z),
\end{align*}
where $k_{ij} \in \kk$ and $p_i(z)$ are polynomials in $z$ of degree at most  $n/2$. Then
\begin{align*}
0 	&= \phi([x,z]+x) \\
	&= [k_{11} x + k_{12} y + p_1(z),kz+c] + (k_{11} x + k_{12} y + p_1(z)) \\
    &= k_{11} (k[x,z]+x) + k_{12}(k[y,z]+y) - p_1(z) \\
    &= k_{11} (1-k)x + k_{12}(1+k)y - p_1(z).
\end{align*}
A similar computation shows that
\[ 0 = \phi([y,z]-y) = -k_{21}(1+k)x -k_{22}(1-k)y - p_2(z).\]
If $k_{11}=k_{12}=0$, then $\phi(x) \in \kk[z]$, violating the surjectivity of $\phi$.
Similarly, we may not have $k_{21}=k_{22}=0$.
If $k_{12}=0$, then $k_{11} \neq 0$ and $k=1$, so $k_{21}=0$.
Otherwise, $k_{11}=0$ so $k=-1$ and $k_{22}=0$.
In either case, $p_1(z)=p_2(z)=0$.

In the first case,
\[ 0 = \phi(yx - a(z)) = k_{11}k_{22} a(z) - a(z+c). \]
We may assume without loss of generality that 
$a(z) = z(z-\rt_1)\cdots(z-\rt_{n-1})$ for some $\rt_i \in \kk$.
Thus, $k_{22}=k_{11}\inv$ and $c=0$, so $\phi=\Theta_{k_{11}}$.

In the second case,
\[ 0 = \phi(yx - a(z)) = k_{12}k_{21} a(z-1) - a(-z+c). \]
As $a(z-1)$ is monic and the leading coefficient of $a(-z+c)$ is $(-1)^n$,
then $k_{12}k_{21} = (-1)^n$.
It follows that $a$ is reflective and $\phi=\Omega$ or $\phi=\Omega \circ \Theta_{-1}$.

(2) If $\phi \in \af{R}$, then $\phi(z) = kz + p(x,y)$ for some polynomial $p$
in $x$ and $y$.
In the filtration, $\deg(x)=\deg(y)=\deg(a)=n$, but $\deg(z)=2$.
By the hypothesis and part (1), we must have $\deg(p(x,y)) \geq 1$, whence $n=2$.
\end{proof}

\begin{theorem}
\label{thm.ngeq3}
Suppose $R=\kk[z][x,y;\sigma,a]$ is a classical GWA with $\deg_z(a) > 2$.
If $a$ is not reflective, then $\af(R)$ is generated by the maps $\Theta_\lambda$.
If $a$ is reflective, then $\af(R)$ is generated by $\Omega$ and the maps $\Theta_\lambda$.
Moreover, we have $\hdet g = 1$ for all $g \in \af(R)$.
\end{theorem}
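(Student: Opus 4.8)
The plan is to separate the two assertions: first determine $\af(R)$ from the two results already in hand, then compute the homological determinant on the resulting generators.

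To identify $\af(R)$, note that since $n := \deg_z(a) > 2$, the contrapositive of Lemma \ref{lem.fil1}(2) forces $\phi(z) = kz + c$ for every $\phi \in \af(R)$, with $c \in \kk$ and $k \in \kk^\times$. Lemma \ref{lem.fil1}(1) then classifies all such $\phi$: either $\phi = \Theta_\beta$ for some $\beta \in \kk^\times$, or $a$ is reflective and $\phi \in \{\Omega, \Omega\circ\Theta_{-1}\}$. In the non-reflective case this gives $\af(R) = \langle \Theta_\lambda : \lambda\in\kk^\times\rangle$ directly. In the reflective case, each listed automorphism lies in $\langle \Omega, \Theta_\lambda\rangle$, and conversely $\Omega$ and every $\Theta_\lambda$ are degree-preserving, so this subgroup is contained in $\af(R)$; its closure is visible from Proposition \ref{prop.gmaps}, since $\Omega\circ\Theta_\beta = \Theta_{\beta^{-1}}\circ\Omega$ and $\Omega^2 = \Theta_{(-1)^n}$. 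This establishes the two generating sets.

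For the homological determinant I would invoke the formula recalled in the introduction: $R$ is filtered, noetherian and AS-Gorenstein with commutative associated graded $\gr R = \kk[x,y,z]/(yx - a_n)$, so $\hdet_R(g) = \det_{\gr R}(g)$ by \cite[Lemma 2.1 and Proposition 2.4]{JZ}. Because $\hdet$ is a homomorphism $\af(R)\to\kk^\times$, it suffices to evaluate it on the generators $\Theta_\lambda$ and $\Omega$. Writing $a_n = z^n$, the ring $\gr R$ is the graded hypersurface $A = S/(f)$ with $S = \kk[x,y,z]$, $f = yx - z^n$, and $\deg x = \deg y = n$, $\deg z = 2$. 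Any graded automorphism $g$ of $A$ lifts to $\tilde g \in \Aut(S)$ scaling the relation, $\tilde g(f) = \xi f$; feeding $g$ through the local cohomology sequence $0 \to \underline{H}^2_\mathfrak{m}(A) \to \underline{H}^3_\mathfrak{m}(S(-2n)) \xrightarrow{\,\cdot f\,} \underline{H}^3_\mathfrak{m}(S)\to 0$ and reading off the action on the socle generator $(xyz)^{-1}$ yields $\hdet(g) = \det_{\gr R}(g) = \xi^{-1}\det(\tilde g|_V)$, where $V = \kk x\oplus\kk y\oplus\kk z$.

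It remains to evaluate this for the two generators. For $\Theta_\beta$ one has $\tilde g(f) = f$, so $\xi = 1$, while $\tilde g|_V$ is diagonal with determinant $\beta\cdot\beta^{-1}\cdot 1 = 1$; hence $\hdet(\Theta_\beta) = 1$. For $\Omega$ the induced map on $V$ is $x\mapsto y$, $y\mapsto(-1)^n x$, $z\mapsto -z$, of determinant $(-1)^n$, and $\tilde\Omega(f) = (-1)^n f$ gives $\xi = (-1)^n$, so $\hdet(\Omega) = \xi^{-1}\det(\tilde\Omega|_V) = (-1)^{-n}(-1)^n = 1$. Multiplicativity then gives $\hdet g = 1$ for all $g\in\af(R)$. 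The main obstacle is precisely this determinant computation: the naive product of determinants on the generators would wrongly produce $\hdet(\Omega) = (-1)^n$, and the point is that the twist $\xi$ arising from the defining relation $f$ cancels it. Getting that cancellation right — and fixing the grading-shift conventions in the local cohomology argument, since $\gr R$ is a non-standardly graded hypersurface rather than a polynomial ring — is the delicate step, whereas the determination of $\af(R)$ is a direct consequence of Lemma \ref{lem.fil1} and Proposition \ref{prop.gmaps}.
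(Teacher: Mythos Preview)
Your argument is correct and follows the same path as the paper: Lemma~\ref{lem.fil1} pins down $\af(R)$, and the homological determinant is computed on $\gr R$ via \cite{JZ}. The paper compresses the second step to a one-line ``routine check'' asserting $\hdet_R(g) = \det_{\kk[x,y,z]}(g) = 1$, whereas you have carefully supplied the hypersurface correction factor $\xi$ --- a worthwhile clarification, since as you observe the naive determinant of $\Omega$ on $V$ is $(-1)^n$ rather than $1$, and it is precisely the cancellation with $\xi = (-1)^n$ that makes the claim true.
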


\begin{proof}
This follows almost entirely from Lemma \ref{lem.fil1}.
Let $g \in \af(R)$ and recall that we have 
$\gr R = \kk[x,y,z]/(xy-a_n)$.
By the discussion in the introduction and a routine check,
$\hdet_R(g) = \hdet_{\gr R}(g) = \det_{\kk[x,y,z]}(g) = 1$.
\end{proof}

In Theorem \ref{thm.refgrp} we completely determine the finite subgroups of $\af(R)$ 
in the case when $n > 3$.

Assume $n = 2$. Without loss of generality, $a = z(z-\rt)$ for some $\rt \in \kk$. Then $\Delta(a) = -2z + \rt + 1$ and $\Delta^2(a) = 2$.
The generators of $\Aut(R)$ above that are also filtered maps can be stated explicitly.
\begin{align*}
&\Theta_\lambda: &
	x &\mapsto \lambda x, &
    y &\mapsto \lambda\inv y, &
    z &\mapsto z, \\
&\Psi_{1,\lambda}: &
	x &\mapsto x, &
    y &\mapsto y - 2\lambda z + \lambda^2  x + \lambda(\rt +1), &
    z &\mapsto z-\lambda x,\\
&\Phi_{1,\lambda}: &
	x &\mapsto x + 2\lambda z + \lambda^2 y - \lambda(\rt +1), &
	y &\mapsto y, &
    z &\mapsto z+\lambda y, \\
&\Omega: &
	x &\mapsto y, & 
    y &\mapsto x, &
    z &\mapsto 1+\rt-z.
\end{align*}
Let $G$ be the group generated by these automorphisms.
We will show below that $\af(R)=G$ in this case.

Before proving our main result regarding $\af(R)$,
we need one more technical lemma.

\begin{lemma}
\label{lem.flaut}
Let $R=\kk[z][x,y;\sigma,a]$ be a classical GWA with $\deg_z(a)=2$.
Suppose $\Gamma \in \af(R)$ and
\[\Gamma(z)=k_1 x + k_2 y + k_4\] 
for some $k_i \in \kk$. Then $k_1k_2 \neq 0$.
\end{lemma}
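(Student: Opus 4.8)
The plan is to argue by contradiction, exploiting that a filtered automorphism $\Gamma$ preserves the defining relations of $R$ together with a coefficient comparison in the degree-two part of the standard filtration. Since $\Gamma$ is filtered and $\deg z = \deg x = \deg y = 2$, while the only elements of filtration degree at most two form the $\kk$-span of $1, x, y, z$, I may write
\[ \Gamma(x) = a_1 x + a_2 y + a_3 z + a_4, \qquad \Gamma(y) = b_1 x + b_2 y + b_3 z + b_4 \]
for scalars $a_i, b_j \in \kk$. Suppose first that $k_1 = 0$, so that $\Gamma(z) = k_2 y + k_4$; I will derive a contradiction, and the case $k_2 = 0$ will follow by the symmetric argument.

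First I would record the relevant commutators in $R$, using the normalization $a = z(z-\rt)$: from $xz = (z-1)x$ and $yz = (z+1)y$ one gets $[x,z] = -x$ and $[y,z] = y$, while $[x,y] = a(z-1) - a(z) = \Delta(a) = -2z + (\rt+1)$. Applying $\Gamma$ to the identity $[x,z] + x = 0$ yields $[\Gamma(x), \Gamma(z)] + \Gamma(x) = 0$. Expanding $[\Gamma(x),\, k_2 y + k_4]$ by bilinearity and substituting the commutators above, the only surviving terms come from $[x,y]$ and $[z,y]$, giving
\[ [\Gamma(x), \Gamma(z)] = a_1 k_2\,[x,y] + a_3 k_2\,[z,y] = -2a_1 k_2\, z + a_1 k_2(\rt+1) - a_3 k_2\, y. \]

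Then I would compare coefficients of $1, x, y, z$ in $[\Gamma(x),\Gamma(z)] = -\Gamma(x)$. The coefficient of $x$ forces $a_1 = 0$; the coefficient of $z$ then gives $a_3 = 2 a_1 k_2 = 0$; the coefficient of $y$ gives $a_2 = a_3 k_2 = 0$; and the constant term gives $a_4 = -a_1 k_2(\rt+1) = 0$. Hence $\Gamma(x) = 0$, contradicting that $\Gamma$ is an automorphism. For $k_2 = 0$, writing $\Gamma(z) = k_1 x + k_4$ and applying $\Gamma$ to $[y,z] - y = 0$ leads, after the identical coefficient comparison, to $\Gamma(y) = 0$, which is again impossible. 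Therefore $k_1 k_2 \neq 0$.

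I do not anticipate a serious obstacle; the argument is a finite linear computation. The one point that requires care is the justification that $\Gamma(x)$ and $\Gamma(y)$ lie in $\kk \oplus \kk x \oplus \kk y \oplus \kk z$, which rests on $\Gamma$ being filtered and on the degree-two component of $R$ under the standard filtration being exactly this span. I would also double-check the signs and the value of $\Delta(a)$ against the explicit generator formulas recorded immediately before the lemma.
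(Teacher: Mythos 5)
Your proof is correct and follows essentially the same route as the paper's: both write $\Gamma(x)$, $\Gamma(y)$ in the span of $1,x,y,z$ using the filtered hypothesis, apply $\Gamma$ to the commutation relations $[x,z]+x=0$ and $[y,z]-y=0$, and compare coefficients against the monomial basis. The only cosmetic difference is that the paper stops after extracting the $y$-coefficient (getting $\ell_2=m_2=0$ and contradicting surjectivity, since the image then lies in the subalgebra generated by $x$ and $z$), whereas you carry the comparison through all four coefficients to conclude $\Gamma(x)=0$ (resp.\ $\Gamma(y)=0$), contradicting injectivity.
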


\begin{proof}
It is clear that we may not have $k_1=k_2=0$.
Suppose $k_2=0$. The case $k_1=0$ follows similarly. Write
\begin{align*}
\Gamma(x) &= \ell_1 x + \ell_2 y + \ell_3 z + \ell_4, \\
\Gamma(y) &= m_1 x + m_2 y + m_3 z + m_4, 
\end{align*}
for $\ell_i,m_i \in \kk$. Then
\begin{align*}
\Gamma(x)
    &= [\Gamma(z),\Gamma(x)]
    = [k_1 x + k_4,\ell_1 x + \ell_2 y + \ell_3 z + \ell_4]
    = [k_1 x,\ell_2 y + \ell_3 z] \\
\Gamma(y)
	&= [\Gamma(y),\Gamma(z)]
    = [m_1 x + m_2 y + m_3 z + m_4,k_1x+k_4]
    = [m_2 y + m_3 z,k_1x].
\end{align*}
In both cases, the image of the commutator is in the subalgebra generated
by $x$ and $z$, implying $\ell_2=m_2=0$.
This contradicts the surjectivity of $\Gamma$.
\end{proof}

\begin{theorem}
\label{thm.flaut}
Let $R=\kk[z][x,y;\sigma,a]$ be a classical GWA with $\deg_z(a)=2$.
Then $\af(R)=G$.
Moreover, if $g \in \af(R)$, then $\hdet(g)=1$.
\end{theorem}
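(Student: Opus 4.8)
The plan is to establish $\af(R) = G$ by proving the two inclusions and then to deduce the determinant claim. The inclusion $G \subseteq \af(R)$ requires no work: each of the four listed generators $\Theta_\lambda$, $\Psi_{1,\lambda}$, $\Phi_{1,\lambda}$, $\Omega$ sends $x$, $y$, and $z$ to elements of filtration degree $2$, so each is filtered. The substance is the reverse inclusion $\af(R) \subseteq G$, which I would obtain by using the maps $\Psi_{1,\lambda}$ and $\Phi_{1,\mu}$ to simplify the image of $z$ under an arbitrary $g \in \af(R)$ until Lemma \ref{lem.fil1} applies.

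Fix $g \in \af(R)$. Since $g$ is filtered and $\deg x = \deg y = \deg z = 2$, I can write $g(z) = k_1 x + k_2 y + k_3 z + k_4$ with $k_i \in \kk$. If $k_1 = k_2 = 0$, then $g(z)$ has the form $k_3 z + k_4$ and Lemma \ref{lem.fil1}(1) gives $g = \Theta_\beta$ for some $\beta$, or else ($a$ reflective and) $g \in \{\Omega,\ \Omega \circ \Theta_{-1}\}$; in every case $g \in G$. In the remaining cases the goal is to produce $h \in G$, a product of maps $\Psi_{1,\lambda}$ and $\Phi_{1,\mu}$, such that $(h \circ g)(z)$ has vanishing $x$- and $y$-coefficients. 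Then $h \circ g$ lies in the base case, so $h \circ g \in G$, and since $h \in G$ we conclude $g = h^{-1} \circ (h \circ g) \in G$.

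The reduction is powered by two computations on the coefficients of the image of $z$. Left composition with $\Phi_{1,\mu}$ fixes the $x$-coefficient and sends the $y$-coefficient to $k_1\mu^2 + k_3\mu + k_2$ and the $z$-coefficient to $2k_1\mu + k_3$; left composition with $\Psi_{1,\lambda}$ fixes the $y$-coefficient and sends the $x$-coefficient to $k_1 + k_2\lambda^2 - k_3\lambda$ and the $z$-coefficient to $k_3 - 2k_2\lambda$. If $k_1 \neq 0$, I first pick $\mu$ to be a root of $k_1\mu^2 + k_3\mu + k_2$ (available since $\kk$ is algebraically closed), obtaining $g' = \Phi_{1,\mu}\circ g$ with $g'(z) = k_1 x + k_3' z + k_4'$. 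Here $k_3' \neq 0$: otherwise $g'(z) = k_1 x + k_4'$ and Lemma \ref{lem.flaut} applied to the filtered automorphism $g'$ would force $k_1 \cdot 0 \neq 0$. Now $\lambda = k_1/k_3'$ in $\Psi_{1,\lambda}$ kills the $x$-coefficient, reaching the base case. If instead $k_1 = 0$ and $k_2 \neq 0$, then Lemma \ref{lem.flaut} shows $k_3 \neq 0$ (else $g(z) = k_2 y + k_4$ would violate it), so $\Phi_{1,\mu}$ with $\mu = -k_2/k_3$ alone removes the $y$-coefficient and reaches the base case.

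For the determinant statement I would use that $\hdet$ is multiplicative, so once $\af(R) = G$ it suffices to verify $\hdet = 1$ on the four generators. By the formula recalled in the introduction, $\hdet_R(g) = \det_{\gr R}(g)$ with $\gr R = \kk[x,y,z]/(xy - z^2)$, and each generator acts on $\Span_\kk\{x,y,z\}$ (the degree-two piece) by a linear map of determinant $1$; this is the computation already used for $\Theta_\lambda$ and $\Omega$ in Theorem \ref{thm.ngeq3}, and for $\Psi_{1,\lambda}$ and $\Phi_{1,\lambda}$ one checks directly that the top-degree linear action has determinant $1$. The step I expect to be most delicate is the bookkeeping in the reduction, specifically ensuring at each stage that the coefficient of $z$ is nonzero so that the next elimination can proceed. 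This is precisely the role of Lemma \ref{lem.flaut}, which rules out the degenerate images $g(z) \in \Span_\kk\{x,1\}$ and $g(z) \in \Span_\kk\{y,1\}$; invoking it correctly in each branch is the only real subtlety, the coefficient computations themselves being routine.
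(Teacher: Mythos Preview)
Your proposal is correct and follows essentially the same approach as the paper: write $g(z)=k_1x+k_2y+k_3z+k_4$, use $\Phi_{1,\mu}$ and $\Psi_{1,\lambda}$ to eliminate the $x$- and $y$-coefficients (with Lemma~\ref{lem.flaut} guaranteeing the $z$-coefficient stays nonzero at the critical step), and then invoke Lemma~\ref{lem.fil1}. The only cosmetic difference is that the paper splits into four cases ($k_1=k_2=0$; $k_1=0,k_2\neq0$; $k_2=0,k_1\neq0$; $k_1,k_2\neq0$) while you fold the last two into a single ``$k_1\neq0$'' case, which works just as well since choosing $\mu=0$ when $k_2=0$ recovers the paper's direct $\Psi$-reduction.
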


\begin{proof}
The statement on homological determinant follows analogously to Theorem \ref{thm.ngeq3} once we have shown that $\af(R)=G$.
Clearly, $G \subset \af(R)$.
Let $\Gamma \in \af(R)$. We may write
$\Gamma(z)=k_1 x + k_2 y + k_3 z + k_4$ for some $k_i \in \kk$.

If $k_1=k_2=0$, then $\Gamma \in G$ by Lemma \ref{lem.fil1}.
Suppose that $k_1=0$ but $k_2 \neq 0$.
By Lemma \ref{lem.flaut}, $k_3\neq 0$. Then
\[ \Phi_{1,-k_2/k_3}(\Gamma(z)) 
	= k_2 y + k_3\left( z - (k_2/k_3)y \right) + k_4
    = k_3z + k_4.
\]
Thus, $\Phi_{1,-k_2/k_3}\circ\Gamma \in G$ again by Lemma \ref{lem.fil1},
so $\Gamma \in G$.
Similarly, if $k_2=0$ but $k_1 \neq 0$, then
\[ \Psi_{1,k_1/k_3}(\Gamma(z)) 
	= k_1 x + k_3\left( z - (k_1/k_3)x \right) + k_4
    = k_3 z + k_4.
\]

Finally, suppose $k_1,k_2 \neq 0$.
Note that we may have $k_3=0$ in this case.
Set $\lambda$ to be a root of $k_1\lambda^2+k_3\lambda+k_2=0$. Then
\begin{align*}
\Phi_{1,\lambda}(\Gamma(z))
	&= k_1 (x + 2\lambda z + \lambda^2 y - \lambda(\rt+1))
    	+ k_2 y + k_3 (z+\lambda y) + k_4 \\
	&= k_1 x + (k_1\lambda^2+k_3\lambda+k_2)y 
    	+ (2k_1\lambda + k_3)z + (k_4-k_1\lambda(\rt+1))  \\
    &= k_1 x + (2k_1\lambda + k_3)z + (k_4-k_1\lambda(\rt+1)).
\end{align*}
Note that $2k_1\lambda+k_3 \neq 0$ by Lemma \ref{lem.flaut}.
We now defer to the above computation.
\end{proof}

Using the techniques of Theorem \ref{thm.flaut}, or straightforward computation,
we achieve our last relation between the generators of $G$.
Given $\lambda,\mu \in \kk$, set $\eta = 1-\lambda\mu$. Then
\begin{align}
\label{eq.phipsi}
\Phi_{1,\mu} \circ \Psi_{1,\lambda}
	= \Psi_{1,\lambda\eta\inv} \circ \Phi_{1,\mu\eta} \circ \Theta_{\eta^{-2}}.
\end{align}

For $\lambda,\mu \in \kk$ and $\beta \in \kk^\times$, set 
$\tau_{\lambda,\mu,\beta}=\Psi_{1,\lambda} \circ \Phi_{1,\mu} \circ \Theta_\beta$.
These maps satisfy
\begin{align*}
\tau_{\lambda,\mu,\beta}: \quad &x \mapsto \beta((\lambda\mu-1)^2 x + \mu^2 y + 2\mu(1-\lambda\mu)z + \mu(\lambda\mu-1)(\rt+1)) \\
 &y \mapsto \beta\inv(y+\lambda^2 x-2\lambda z + \lambda(\rt+1)) \\
 &z \mapsto (1-2\lambda\mu)z + \lambda(\lambda\mu-1)x + \mu y + \lambda\mu(\rt+1).
\end{align*}
Note that $\tau_{\lambda,\mu,\beta}=\id$ if and only if $\lambda=\mu=0$ and $\beta=1$.

\begin{corollary}
\label{cor.filt}
Let $R=\kk[z][x,y;\sigma,a]$ be classical GWA with $\deg_z(a)=2$.
If $g \in \af(R)$, then either $g=\tau_{\lambda,\mu,\beta}$ or $\tau_{\lambda,\mu,\beta} \circ \Omega$ for an appropriate choice of $\lambda,\mu,\beta$.
\end{corollary}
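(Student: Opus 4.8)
The plan is to realize the set
\[
S:=\{\,\tau_{\lambda,\mu,\beta}\,\}\cup\{\,\tau_{\lambda,\mu,\beta}\circ\Omega\,\}
\]
as a subgroup of $\af(R)$. By Theorem \ref{thm.flaut} we have $\af(R)=G$, which is generated by $\Theta_\beta$, $\Psi_{1,\lambda}$, $\Phi_{1,\mu}$ and $\Omega$ (recall that for $n=2$ the polynomial $a=z(z-\rt)$ is reflective with $\rho=\rt$, so $\Omega\in\af(R)$). Since $\id=\tau_{0,0,1}\in S$ and each generator lies in $S$ (indeed $\Theta_\beta=\tau_{0,0,\beta}$, $\Psi_{1,\lambda}=\tau_{\lambda,0,1}$, $\Phi_{1,\mu}=\tau_{0,\mu,1}$ and $\Omega=\tau_{0,0,1}\circ\Omega$), and since the generating families are closed under inverses, every element of $\af(R)$ is a positive word in these generators. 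Hence it is enough to prove that $S$ is closed under left multiplication by each of the four generators; a one-line induction on word length then yields $\af(R)=S$, which is precisely the claim.

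First I would dispatch the triangular cases. Left multiplication by $\Psi_{1,\lambda}$ merges into the leading factor by Proposition \ref{prop.gmaps}(1); left multiplication by $\Theta_\beta$ slides past $\Psi$ and $\Phi$ using Proposition \ref{prop.gmaps}(2) and then absorbs into the trailing $\Theta$ by (3); and right multiplication by $\Theta_\beta$ or by $\Omega$ keeps an element of $S$ in $S$ by the same relations together with the identity $\Omega^2=\id$ (a direct check). Each of these is a short computation that returns an element of the form $\tau_{\ast,\ast,\ast}$ or $\tau_{\ast,\ast,\ast}\circ\Omega$. What remains is left multiplication by $\Phi_{1,\mu}$ and by $\Omega$.

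Using Proposition \ref{prop.gmaps}(4)--(5) to transport $\Omega$ rightward through $\Psi$ and $\Phi$ (which interchanges the two) and through $\Theta_\beta$ (which inverts $\beta$), one checks that both $\Omega\circ\tau_{\lambda,\mu,\beta}$ and $\Phi_{1,\nu}\circ\tau_{\lambda,\mu,\beta}$ reduce, after sliding and merging as above, to the single task of writing a product $\Phi_{1,\mu'}\circ\Psi_{1,\lambda'}$ in the form $\tau$ or $\tau\circ\Omega$. This is the crux. When $\eta:=1-\lambda'\mu'\neq0$, relation \eqref{eq.phipsi} rewrites it as $\Psi_{1,\lambda'\eta\inv}\circ\Phi_{1,\mu'\eta}\circ\Theta_{\eta^{-2}}$, a $\tau$, and the reduction closes at once. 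The main obstacle is the degenerate case $\eta=0$, that is $\lambda'\mu'=1$, where \eqref{eq.phipsi} is unavailable. Here I would compute the product directly on $x,y,z$ and verify the identity
\[
\Phi_{1,\mu'}\circ\Psi_{1,\lambda'}=\Psi_{1,-\lambda'}\circ\Theta_{\lambda'^{2}}\circ\Omega=\tau_{-\lambda',0,\lambda'^{2}}\circ\Omega\in S,
\]
which is exactly the point at which the reflective generator $\Omega$ is forced to appear, explaining the ``or $\tau_{\lambda,\mu,\beta}\circ\Omega$'' in the statement.

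The delicate part is bookkeeping: feeding the degenerate identity back into the reduction inserts an $\Omega$ in the interior of a word, which must then be pushed to the right through Proposition \ref{prop.gmaps}(4)--(5). What makes the procedure terminate is that the degenerate output $\tau_{-\lambda',0,\lambda'^{2}}$ has \emph{trivial} $\Phi$-part, so no fresh $\Phi\circ\Psi$ adjacency is created; consequently at most one further appeal to \eqref{eq.phipsi} (or to the degenerate identity) is ever needed, and the product always collapses to a single element of $S$. Organizing the finitely many cases---$\Phi$ or $\Omega$ acting on a $\tau$ versus a $\tau\circ\Omega$, with $\eta$ zero or nonzero---and confirming this termination is the only nonroutine step; everything else is a mechanical application of the relations already established.
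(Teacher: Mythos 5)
Your proposal is correct, and its skeleton is the same as the paper's: the paper's entire proof is the citation of Theorem \ref{thm.flaut}, Proposition \ref{prop.gmaps}, and \eqref{eq.phipsi}, i.e.\ exactly the word-rewriting into the normal form $\Psi\circ\Phi\circ\Theta$ or $\Psi\circ\Phi\circ\Theta\circ\Omega$ that you carry out. The substantive difference is that you isolate and repair the degenerate case $\eta=1-\lambda'\mu'=0$, where \eqref{eq.phipsi} is unavailable and which the paper's one-line proof passes over in silence. Your supplementary identity is correct: using $\mu=\lambda\inv$, both sides of
\[
\Phi_{1,1/\lambda}\circ\Psi_{1,\lambda}=\Psi_{1,-\lambda}\circ\Theta_{\lambda^{2}}\circ\Omega=\tau_{-\lambda,0,\lambda^{2}}\circ\Omega
\]
send $x\mapsto x+2\lambda\inv z+\lambda^{-2}y-\lambda\inv(\rt+1)$, $y\mapsto\lambda^{2}x$, and $z\mapsto(1+\rt)-z-\lambda x$ (recall that for $n=2$ the polynomial $a=z(z-\rt)$ is always reflective with $\rho=\rt$, so $\Omega$ exists). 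This case is not vacuous: $\Phi_{1,1/\lambda}\circ\Psi_{1,\lambda}$ is a genuine filtered automorphism lying in the $\Omega$-coset, and your identity even shows that $\Omega=\Theta_{\lambda^{-2}}\circ\Psi_{1,\lambda}\circ\Phi_{1,1/\lambda}\circ\Psi_{1,\lambda}$ belongs to the subgroup generated by $\Theta$, $\Psi_{1,\cdot}$, $\Phi_{1,\cdot}$ when $n=2$. Some such supplement is needed no matter how one organizes the proof: the paper's implicit route (run the reduction in the proof of Theorem \ref{thm.flaut} to write $\Gamma=\Phi_{1,\mu'}\circ\Psi_{1,\lambda'}\circ h$ with $h$ as in Lemma \ref{lem.fil1}, then reorder) hits the same $\eta=0$ obstruction. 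Your termination bookkeeping is also sound: the degenerate collapse outputs a $\tau$ with trivial $\Phi$-part, and the spawned $\Omega$ moves rightward through $\Phi_{1,\mu}$, $\Psi_{1,\lambda}$, and $\Theta_{\beta}$ via Proposition \ref{prop.gmaps}(4)--(5), using also $\Omega\circ\Phi_{1,\mu}=\Psi_{1,\mu}\circ\Omega$ (which follows from (5) and $\Omega^{2}=\id$ for $n$ even), without ever creating a fresh $\Phi\circ\Psi$ adjacency. In short: same method as the paper, but your version is the more complete argument.
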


\begin{proof}
This follows from \eqref{eq.phipsi}, Proposition \ref{prop.gmaps}, and Theorem \ref{thm.flaut}.
\end{proof}

Now that we understand $\af(R)$ when $\deg_z(a)=2$, we are ready
to consider fixed rings of $R$ by its cyclic subgroups.
We first give two examples that illustrate our methods before stating our main theorem.

\begin{example}
\label{ex.fixed}
Let $R=\kk[z][x,y;\sigma,a]$ be a classical GWA with $a = z(z-\rt)$.
Let $\alpha \in \kk$ and $\beta \in \kk^\times$.

(1) This example is similar to \cite[Example 2.7]{KK}, in which the authors compute $\wa^\grp{\Omega}$.
Define
\[
X = \frac{i}{2}(x-y)-\left(z-\frac{1+\rt}{2}\right), \quad
Y = \frac{i}{2}(x-y)+\left(z-\frac{1+\rt}{2}\right), \quad
\text{ and} \quad  
Z = \frac{i}{2}(x+y).
\]
Set $k=\frac{1}{4}(1-\rt^2)$,
$K_{\pm}=\frac{1}{2}(-1\pm\sqrt{1-4k})$, and $a'(Z) = (Z-K_+)(Z-K_-)$. 
Let $\varsigma: \kk[Z] \to \kk[Z]$ be the automorphism mapping $Z$ to $Z-1$. 
Then $R=\kk[Z][X,Y;\sigma, a'(Z)]$ and we have $\Omega(X)=-X$, $\Omega(Y)=-Y$, $\Omega(Z)=Z$.
Thus, by \cite[Theorem 2.7]{JW}, $R^\grp{\Omega}=\kk[Z][X^2,Y^2;\varsigma^2, a'(Z)\varsigma\inv(a'(Z))]$.

(2) Let $\beta$ be a primitive $\ell$th root of unity for some $\ell \geq 2$. 
Set $\pi_{\alpha, \beta} = \Phi_{1,\alpha} \circ \Theta_{\beta}$ and note that 
$|\pi_{\alpha,\beta}|=\ell$ by Proposition \ref{prop.gmaps}. Define
\[
X = x + \frac{\alpha^2\beta^2}{(\beta-1)^2}y + \frac{2 \alpha \beta}{ \beta - 1} z - \frac{\alpha \beta (r+1)}{\beta-1}, \quad
Y = y, \quad
\text{ and} \quad  
Z =  z + \frac{\alpha \beta}{\beta-1}y.
\]
Let $\varsigma: \kk[Z] \to \kk[Z]$ be the automorphism mapping $Z$ to $Z-1$. 
Then $R=\kk[Z][X,Y;\varsigma, a(Z)]$ and we have $\pi_{\alpha, \beta}(X) = \beta X$, 
$\pi_{\alpha, \beta}(Y) = \beta\inv Y$ and $\pi_{\alpha,\beta} (Z) = Z$.
Thus, $R^\grp{\pi_{\alpha,\beta}}=\kk[Z][X^\ell,Y^\ell;\varsigma^\ell, A(Z)]$ 
where $A(Z) = \prod_{i=0}^{\ell-1} \varsigma^{-i}(a(Z))$.
\end{example}

We now show that, given an appropriate basis, one can diagonalize the action of $\tau_{\lambda,\mu,\beta}$ 
and $\tau_{\lambda,\mu,\beta} \circ \Omega$ when the maps have finite order.
Computations for the next theorem were done using Maple and the NCAlgebra package of Macaulay2.

\begin{theorem}
\label{thm.fixed}
Let $R=\kk[z][x,y;\sigma,a]$ be a classical GWA with $a=z(z-\rt)$.
If $g \in \af(R)$ with $|g|=\ell$, $2\leq \ell <\infty$, then the 
action of $g$ is diagonalizable and hence $R^\grp{g}$ is again a GWA.
\end{theorem}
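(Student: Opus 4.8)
The plan is to reduce the whole statement to the Jordan--Wells theorem \cite[Theorem 2.7]{JW} by exhibiting, for each finite-order $g\in\af(R)$, a \emph{new} classical GWA presentation $R=\kk[Z][X,Y;\varsigma,a'(Z)]$ with $\varsigma(Z)=Z-1$ in which $g$ acts diagonally as $g(X)=\gamma X$, $g(Y)=\gamma\inv Y$, $g(Z)=Z$ for a primitive $\ell$th root of unity $\gamma$. Once this normal form is in hand, $g=\Theta_\gamma$ in the new generators, and \cite[Theorem 2.7]{JW} immediately gives $R^{\grp{g}}=\kk[Z][X^\ell,Y^\ell;\varsigma^\ell,\prod_{i=0}^{\ell-1}\varsigma^{-i}(a'(Z))]$, which after rescaling $Z$ is a classical GWA, exactly as in Example \ref{ex.fixed}. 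So all the work lies in producing the diagonalizing change of generators, and by Corollary \ref{cor.filt} it suffices to treat $g=\tau_{\lambda,\mu,\beta}$ and $g=\tau_{\lambda,\mu,\beta}\circ\Omega$.

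I would work inside the filtered piece $F_2=\kk\oplus\kk x\oplus\kk y\oplus\kk z$, which $g$ preserves and on which it acts linearly fixing $1$; since $|g|=\ell<\infty$ and $\chr\kk=0$, the restriction $g|_{F_2}$ is diagonalizable. On the quotient $W=F_2/\kk$ the commutator of $R$ descends to $[\bar z,\bar x]=\bar x$, $[\bar z,\bar y]=-\bar y$, $[\bar x,\bar y]=-2\bar z$, so $W\iso\mathfrak{sl}_2$ and the induced map $\bar g$ is a finite-order Lie algebra automorphism. As $\mathfrak{sl}_2$ admits only inner automorphisms, $\bar g$ is conjugation by a finite-order $s\in\SL_2(\kk)$; diagonalizing $s$ shows the eigenvalues of $\bar g$ are $\gamma,\gamma\inv,1$ (consistent with $\hdet g=\det\bar g=1$ from Theorem \ref{thm.flaut}), with fixed line spanned by a regular semisimple $Z_0$. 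A short separate argument rules out $\gamma=1$: an automorphism acting trivially on $W$ must fix $x,y,z$ exactly, since the GWA relations force the constant shifts to vanish, whence $g=\id$, contradicting $\ell\geq2$.

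Next I would build the generators. Because $g|_{F_2}$ is diagonalizable, the eigenlines of $\bar g$ lift to genuine eigenvectors of $g|_{F_2}$: for eigenvalue $1$ choose $Z\in F_2\setminus\kk$ with $g(Z)=Z$, and for $\gamma,\gamma\inv$ choose $X,Y$. The element $\ad Z_0$ is regular semisimple with eigenvalues proportional to $1,0,-1$, and since $g(Z)=Z$ the map $\bar g$ commutes with $\ad Z$; hence the nonzero $\ad Z$-eigenvectors are simultaneously $\bar g$-eigenvectors and, after rescaling $Z$, may be taken to be exactly $X,Y$ with $[Z,X]=X$ and $[Z,Y]=-Y$, i.e. $XZ=(Z-1)X$ and $YZ=(Z+1)Y$. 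This is what pins down the raising/lowering generators even when $\gamma=-1$ (e.g. $g=\Omega$, $\ell=2$), where $\bar g$ alone fails to separate $X$ from $Y$. Finally $YX$ is $\ad Z$-invariant and of filtration degree $4$, so a direct check places it in $\kk[Z]$ as a quadratic $a'(Z)$; scaling $Y$ and translating $Z$ by constants (which preserve all eigenvalue relations, as $g(Z)=Z$) normalizes $a'$ to be monic with $0$ a root. That $\gamma$ is a \emph{primitive} $\ell$th root is clear, since $\gamma^k=1$ forces $g^k$ to fix $X,Y,Z$ and hence to be the identity.

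The main obstacle is the degenerate-eigenvalue bookkeeping, precisely as in the special branch of Proposition \ref{prop.wa}: when $\gamma=\pm1$ the eigenspaces of $\bar g$ collapse, and the explicit eigenvector formulas one would write down in a purely computational proof via the $\tau_{\lambda,\mu,\beta}$ presentation acquire vanishing denominators. The conceptual argument through $\ad Z$ above is designed to absorb these cases uniformly; a computational proof would instead handle $g=\tau_{\lambda,\mu,\beta}$ and $g=\tau_{\lambda,\mu,\beta}\circ\Omega$ separately and further split off the subcases where the discriminant of the linear part vanishes, matching the two diagonalizations carried out explicitly in Example \ref{ex.fixed}.
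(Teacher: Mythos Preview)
Your proposal is correct and takes a genuinely different route from the paper. The paper's proof is purely computational: it splits into the two cases $g=\tau_{\lambda,\mu,\beta}$ and $g=\tau_{\lambda,\mu,\beta}\circ\Omega$ from Corollary~\ref{cor.filt}, writes down explicit closed-form expressions for $X,Y,Z$ in terms of $x,y,z$ and the parameters $\lambda,\mu,\beta,\rt$ (with a further special subcase $\lambda\mu=1$ handled separately), and then verifies the GWA relations and the diagonal action by direct calculation aided by computer algebra. Your argument instead exploits the hidden Lie-theoretic structure: the identification $F_2/\kk\cong\mathfrak{sl}_2$ forces every finite-order $\bar g$ to be conjugation by a semisimple element of $\SL_2$, which yields the eigenvalue pattern $\gamma,\gamma^{-1},1$ and singles out a regular semisimple fixed vector $\bar Z$ whose adjoint action then produces the raising and lowering generators uniformly, with no case split and no machine verification. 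The paper's approach buys explicit formulas for the new generators and for $a'(Z)$, which feed directly into the later corollaries on global dimension and the Calabi--Yau property; your approach buys a conceptual explanation of \emph{why} the diagonalization always succeeds and absorbs the degenerate-parameter subcases automatically.

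One point deserves a sentence more of justification than ``a direct check'': the claim that $YX\in\kk[Z]$. You know $YX\in F_4$ and $[Z,YX]=0$, but the $\ad Z$-invariants in $F_4$ are not obviously only $\Span\{1,Z,Z^2\}$. The clean way to close this is to note that the derivation $\ad Z$ descends to $\gr R$ with $\bar X\mapsto\bar X$, $\bar Y\mapsto-\bar Y$, $\bar Z\mapsto 0$; since $\gr R$ is a domain, the defining relation must be an $\ad Z$-eigenvector of eigenvalue $0$, hence lies in $\Span\{\bar Z^2,\bar X\bar Y\}$ with both coefficients nonzero. Thus $\bar X\bar Y$ and $\bar Z^2$ are proportional in $\gr_4 R$, so $\ker(\ad Z)\cap\gr_4 R$ is one-dimensional, and the filtration exact sequence gives $\dim\bigl(\ker(\ad Z)\cap F_4\bigr)\le 3$, forcing $\ker(\ad Z)\cap F_4=\Span\{1,Z,Z^2\}$. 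This also confirms that $a'$ is genuinely quadratic, since $\bar Y\bar X\neq 0$ in the domain $\gr R$. Similarly, the passage from $[\bar Z,\bar X]=\bar X$ in $W$ to $[Z,X]=X$ in $R$ (with no constant term) follows by applying $g$ to $[Z,X]=X+c$ and using $\gamma\neq 1$; you allude to this mechanism but it is worth making explicit.
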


\begin{proof}
First suppose that $g=\tau_{\lambda,\mu,\beta}$. 
Set $w=\beta\lambda\mu-\beta-1$ and
\[
A = (2\sqrt{\lambda\mu\beta(w^2-4\beta)})\inv, \qquad 
C = (-1\sqrt{w^2-4\beta})\inv, \qquad
K_{\pm}=\frac{(\rt +1)(w+2)}{2\sqrt{w^2-4\beta}}\pm \frac{\rt -1}{2}.
\]
Let
\begin{align*}
X &= A\left(
		\lambda \left(w+2+\sqrt{w^2-4\beta} \right) x
	+ 	\beta\mu \left(w+2-\sqrt{w^2-4\beta} \right) y 
    - 	4\lambda\mu\beta z + 2\lambda\mu\beta(\rt +1)
    \right) \\
Y &= A\left(
		\lambda \left(w+2-\sqrt{w^2-4\beta} \right) x
	+ 	\beta\mu \left(w+2+\sqrt{w^2-4\beta} \right) y 
    - 	4\lambda\mu\beta z + 2\lambda\mu\beta(\rt +1)
    \right) \\
Z &= C\left( \lambda x + \beta\mu y - (w+2)z\right).
\end{align*}

Next, suppose $g=\tau_{\lambda,\mu,\beta} \circ \Omega$. 
In the generic case, $\lambda\mu \neq 1$, set $w=\lambda+\mu\beta$ and
\[
A=\frac{\sqrt{\beta(1-\lambda\mu)}}{2w\sqrt{w^2-4\beta}}, \qquad 
C=-\frac{1}{w^2-4\beta}, \qquad
K_{\pm}=\frac{(\rt +1)(\lambda-\beta\mu)}{2\sqrt{w^2-4\beta}}\pm \frac{\rt -1}{2}.
\]
Let
\begin{align*}
X &= A\left(\left( \lambda^2-(\mu\beta)^2 + w\sqrt{w^2-4\beta} \right) x
	+ \left( \frac{(\mu\beta)^2-\lambda^2+ w\sqrt{w^2-4\beta}}{\beta(\lambda\mu-1)} \right)y - 4wz + 2w(\rt +1)\right) \\
Y &= A\left(\left( \lambda^2-(\mu\beta)^2 - w\sqrt{w^2-4\beta} \right) x
	+ \left( \frac{(\mu\beta)^2-\lambda^2- w\sqrt{w^2-4\beta}}{\beta(\lambda\mu-1)} \right)y - 4wz + 2w(\rt +1)\right) \\
Z &= C\left( -\beta(\lambda\mu-1)x + y + (\beta\mu-\lambda)z\right).
\end{align*}
We consider the special case when $\lambda\mu=1$ at the end.

Let $X, Y, Z \in R$ be defined as above depending on the case. 
In either case, let $\varsigma: \kk[Z] \to \kk[Z]$ be the automorphism mapping 
$Z$ to $Z-1$ and set $a'(Z):=(Z-K_+)(Z-K_-)$.
Direct computations show that 
$XZ = (Z - 1) X$, $YZ = (Z + 1) Y$, $YX = a'(Z)$, and $XY=\varsigma(a'(Z))$. 
Since $X$, $Y$, and $Z$ generate $R$ as an algebra, 
then $R$ has a presentation as the classical GWA $\kk[Z][X,Y;\varsigma, a'(Z)]$.

Let
\[ \gamma = \frac{1}{2\beta}\left(w^2-2\beta+w\sqrt{w^2-4\beta}\right)\]
and note that if $w^2-4\beta=0$, then $\gamma=1$.
A check shows that 
$\tau_{\lambda,\mu,\beta}(X) = \gamma X$, $\tau_{\lambda,\mu,\beta}(Y) = \gamma\inv Y$ and $\tau_{\lambda,\mu,\beta}(Z) = Z$. 
Thus, the action of $g$ is diagonal with respect to this presentation
and so by \cite[Theorem 2.7]{JW}, $R^{\grp{g}}=\kk[Z][X^\ell,Y^\ell;\varsigma^\ell, A(Z)]$ where 
$A(Z) = \prod_{i=0}^{n-1} \varsigma^{-i}(a'(Z))$.

Finally, suppose we are in the case $g=\tau_{\lambda,\mu,\beta}\circ\Omega$
but $\lambda\mu=1$. Here, we set $K_+=\rt$, $K_- = 0$, and let
\begin{align*}
X &= \left( \beta-\lambda^2 \right) x
	+ \left( \frac{\lambda^2}{\beta-\lambda^2} \right)y + 2\lambda z 
    - (\rt+1)\lambda, \\
Y &= \left( \frac{1}{\beta-\lambda^2} \right)y, \quad
Z = \left( \frac{\lambda}{\beta-\lambda^2} \right)y + z.
\end{align*}
The same argument as before works with $\gamma=\lambda^2/\beta$.
\end{proof}

The next theorem is analogous to the main result in \cite{Sm},
as well as \cite[Theorem 2]{AP}.
That is, these GWAs are {\it rigid} with respect to cyclic group actions.

\begin{corollary}
\label{cor.smith}
Let $R$ be a classical GWA with $\deg_z(a)=2$ and let $G,H \subset \af(R)$
be finite cyclic groups. If $R^G \iso R^H$, then $G \iso H$. In particular,
if $R^H \iso R$, then $H$ is trivial.
\end{corollary}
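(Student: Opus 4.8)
The plan is to leverage the explicit description of the fixed rings provided by Theorem \ref{thm.fixed} and reduce the entire statement to the invariance of a single numerical quantity: the degree of the defining polynomial of a classical GWA. Write $G = \grp{g}$ and $H = \grp{h}$ with $|g| = |G| = \ell$ and $|h| = |H| = \ell'$ finite. For the nontrivial case $\ell \geq 2$, Theorem \ref{thm.fixed} supplies a presentation $R = \kk[Z][X,Y;\varsigma,a'(Z)]$ with $\deg_Z a' = 2$ in which $g$ acts diagonally, together with the identification
\[ R^G = R^{\grp{g}} = \kk[Z][X^\ell, Y^\ell; \varsigma^\ell, A(Z)], \qquad A(Z) = \prod_{i=0}^{\ell-1} \varsigma^{-i}(a'(Z)). \]
Since each factor $\varsigma^{-i}(a'(Z)) = a'(Z+i)$ has degree $2$, the defining polynomial $A(Z)$ of the classical GWA $R^G$ has degree exactly $2\ell = 2|G|$; note this holds uniformly whether $g$ is of the form $\tau_{\lambda,\mu,\beta}$ or $\tau_{\lambda,\mu,\beta}\circ\Omega$, and also in the degenerate subcase $\lambda\mu=1$. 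When $G$ is trivial the same conclusion holds vacuously, as $R^G = R$ has a defining polynomial of degree $2 = 2|G|$. Running this for $h$ shows $R^H$ is a classical GWA with defining polynomial of degree $2|H|$.

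Granting that the degree of the defining polynomial is an isomorphism invariant of classical GWAs, the corollary is immediate: $R^G \iso R^H$ forces $2|G| = 2|H|$, hence $|G| = |H|$, and since finite cyclic groups of equal order are isomorphic we conclude $G \iso H$. The ``in particular'' clause is the special case $R^H \iso R$: here $R$ has a defining polynomial of degree $2$, so $2|H| = 2$ and $H$ is trivial.

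The main obstacle, and the only nonroutine point, is establishing degree invariance. I would deduce it from the isomorphism theory of Bavula and Jordan \cite{BJ}: after normalizing (monic, with $0$ a root), any isomorphism of classical GWAs carries one defining polynomial to the other up to an affine change of the central variable, an overall nonzero scalar, and---in the reflective case---the substitution $z \mapsto \rho - z$, each of which preserves degree. An alternative intrinsic route passes through the associated graded ring: under the standard filtration $\gr R \iso \kk[x,y,z]/(yx - z^{n})$ with $n = \deg_z a$, realizing $R$ as a deformation of the type-$A_{n-1}$ Kleinian singularity whose type is recovered homologically. The delicate feature of this second route is that one must check the recovered invariant is independent of the chosen filtered presentation, so I expect the appeal to \cite{BJ} to give the cleaner argument, with the degree computation of $A(Z)$ being entirely mechanical.
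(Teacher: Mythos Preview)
Your argument is correct and follows essentially the same route as the paper: compute via Theorem~\ref{thm.fixed} that the fixed ring $R^G$ is a classical GWA with defining polynomial of degree $2|G|$, then invoke Bavula--Jordan \cite[Theorem~3.28]{BJ} to see that this degree is an isomorphism invariant. The paper's proof is terser but identical in substance; your additional remarks on the alternative associated-graded route are not needed here.
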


\begin{proof}
An isomorphism of classical GWAs must preserve the degree of the 
defining polynomial \cite[Theorem 3.28]{BJ}.
By Theorem \ref{thm.fixed}, the degree of the defining polynomial is
in $R^G$ (resp. $R^H$) is $2|G|$ (resp. $2|H|$).
Thus, if $R^G \iso R^H$, then $|G|=|H|$.
\end{proof}

Let $R=\kk[z][x,y;\sigma,a]$ be a GWA, not necessarily classical.
Two roots $\alpha, \beta$ of $a$ are said to be {\sf congruent} if there exists an $i \in \ZZ$ such that, as ideals of $\kk[z]$, $\left(\sigma^i(z-\alpha)\right) = \left( z - \beta \right)$.
By \cite[Theorem 1.6]{B2} and \cite[Theorem 4.4]{H}, 
the global dimension of $R$ satisfies
\[
\gldim R = \begin{cases}
\infty 	&	\text{if $a$ has a multiple root} \\
2		&	\text{if $a$ has a congruent root and no multiple roots} \\
1		&	\text{if $a$ has no congruent roots and no multiple roots.}
\end{cases}
\]
\begin{corollary}
\label{cor.gldim}
Let $R=\kk[z][x,y;\sigma,a]$ be a classical GWA with $a=z(z-\rt)$
and let $H \subset \af(R)$ be a finite cyclic group with $|H|>2$.
\begin{enumerate}
\item If $\gldim(R)=\infty$, then $\gldim(R^H)=\infty$.
\item If $\gldim(R)=1$, then $\gldim(R^H)=1$.
\item If $\gldim(R)=2$, then $\rt \in \ZZ$ and 
\[
\gldim(R^H)=\begin{cases}
2 & \text{if $|\rt|\geq |H|$} \\
\infty & \text{otherwise}.
\end{cases}
\]
\end{enumerate}
\end{corollary}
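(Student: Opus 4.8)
The plan is to leverage Theorem~\ref{thm.fixed}, which tells us that $R^H$ is again a classical GWA, and then apply the trichotomy for global dimension of GWAs stated just above the corollary. The work reduces to computing the defining polynomial $A(Z)$ of $R^H$ and reading off whether it has multiple roots, congruent roots, or neither. By Theorem~\ref{thm.fixed}, after passing to a suitable presentation $R = \kk[Z][X,Y;\varsigma, a'(Z)]$ in which the action of a generator $g$ of $H$ is diagonal, we have $R^H = \kk[Z][X^\ell, Y^\ell; \varsigma^\ell, A(Z)]$ where $A(Z) = \prod_{i=0}^{\ell-1}\varsigma^{-i}(a'(Z))$ and $\varsigma(Z) = Z-1$, $\ell = |H|$. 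Since $a'(Z) = (Z-K_+)(Z-K_-)$ is a degree-two polynomial, $A(Z) = \prod_{i=0}^{\ell-1}(Z - K_+ - i)(Z - K_- - i)$ is an explicit product of $2\ell$ linear factors, and the entire analysis is the combinatorics of when these $2\ell$ roots collide or lie in a common $\ZZ$-orbit under $\varsigma$.

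\textbf{Case analysis.} First I would observe that a root $r$ of $A(Z)$ is a multiple root exactly when two of the factors coincide, i.e. when $K_+ + i = K_- + j$ for some $0 \le i,j \le \ell-1$, equivalently when $K_+ - K_-$ is an integer of absolute value less than $\ell$. From Theorem~\ref{thm.fixed} one computes $K_+ - K_- = \rt - 1 +1 = \rt$ in the first branch and similarly $K_+ - K_- = \rt$ in the reflective branch (the $\pm\frac{\rt-1}{2}$ summands differ by $\rt-1$, but the constant shift contributes the remaining unit), so the relevant quantity is governed by $\rt$. For part~(1), if $\gldim R = \infty$ then $a = z(z-\rt)$ has a multiple root, forcing $\rt = 0$; then $K_+ = K_-$, so every factor is repeated and $A(Z)$ has a multiple root, giving $\gldim R^H = \infty$. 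For part~(2), $\gldim R = 1$ means $a$ has no congruent and no multiple roots, which forces $\rt \notin \ZZ$ and $\rt \neq 0$; then $K_+ - K_- = \rt \notin \ZZ$, so no two factors of $A(Z)$ can collide (no multiple root) and no two can be congruent under $\varsigma$ (congruence requires an integer difference), yielding $\gldim R^H = 1$.

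\textbf{The delicate part.} The substance is part~(3). Here $\gldim R = 2$ means $a$ has a congruent but no multiple root, so $\rt \in \ZZ$ with $\rt \neq 0$; this is exactly where the hypothesis $\rt \in \ZZ$ is recorded. Now $K_+ - K_- = \rt$ is a nonzero integer, and I must determine whether the $2\ell$ roots $\{K_\pm - i : 0 \le i \le \ell-1\}$ contain a coincidence. A multiple root occurs iff $K_+ - i = K_- - j$ for some $i,j$ in range, i.e. iff $\rt = i - j$ is achievable with $0 \le i,j \le \ell - 1$, which happens precisely when $|\rt| \le \ell - 1$, i.e. $|\rt| < \ell = |H|$. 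In that case $A(Z)$ has a multiple root and $\gldim R^H = \infty$. When $|\rt| \ge \ell$, no coincidence occurs so $A(Z)$ has distinct roots; the two $\varsigma^\ell$-orbits (one through $K_+$, one through $K_-$) still satisfy $K_+ - K_- = \rt$, and since $\rt \neq 0$ the polynomial retains a congruent root under $\varsigma^\ell$ (the step of the shifted automorphism is $\ell$, and one checks $\rt$ need not be a multiple of $\ell$, but the two distinct roots of $a'$ already guarantee a congruence is inherited), so $\gldim R^H = 2$. The main obstacle I anticipate is bookkeeping the exact values of $K_+ - K_-$ across the several branches of Theorem~\ref{thm.fixed} (including the $\lambda\mu = 1$ degenerate case, where $K_+ = \rt$, $K_- = 0$ are given directly) and verifying that in every branch the difference equals $\rt$, so that the clean numerical criterion $|\rt| \ge |H|$ governs all cases uniformly; once that normalization is confirmed, the congruence-versus-coincidence count is elementary.
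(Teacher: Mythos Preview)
Your approach is essentially the same as the paper's: reduce via Theorem~\ref{thm.fixed} to a diagonal action, observe that the difference $K_+-K_-$ of the roots of $a'(Z)$ equals $\rt$ (the paper simply asserts $|K_+-K_-|=\rt$ and notes the change of basis preserves the root difference), and then read off the multiplicity and congruence behaviour of the $2\ell$ roots of $A(Z)$. Your case analysis is correct; the one place where your writeup wobbles is the sentence ``the constant shift contributes the remaining unit'' to get $K_+-K_-=\rt$ from the $\pm\frac{\rt-1}{2}$ terms---that arithmetic does not work as written, but the underlying claim $|K_+-K_-|=|\rt|$ is correct (it follows from Bavula--Jordan's isomorphism theory for classical GWAs, since the change of basis is an isomorphism of classical GWAs and hence preserves the multiset of root differences), and the paper relies on the same fact without further justification.
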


\begin{proof}
By Theorem \ref{thm.fixed}, it suffices to consider the fixed ring by a diagonal action on $R$.
Note that in Theorem \ref{thm.fixed}, we have $|K_+-K_-|= \rt$ and so
the change of basis does not affect the difference between the roots.
We freely use the notation from that theorem in this proof.

Recall that $A(Z)=\prod_{i=0}^n \varsigma^{-i}(a'(Z))$.
If $a$ has a multiple root then so does $A$, proving (1). 
Suppose that $\rt >0$. The case $\rt <0$ is similar.
Then the roots of $A(Z)$ are $0,1,\hdots,n-1,\rt ,\rt +1,\hdots,\rt +(n-1)$.
In this case, $\gldim(R^H)=\infty$ if and only if $0 < \rt \leq n-1$.
Because the automorphism associated to $R^H$ is $\varsigma^n$,
where $\varsigma(Z)=Z-1$, then it follows that $R^H$ has congruent
roots if and only if $R$ has congruent roots, proving (2).
Furthermore, if $\gldim(R)=2$, so $\rt \in \ZZ$ and $\rt \neq 0$, then $\gldim(R^H) \geq 2$
and $A(Z)$ has multiple roots if and only if $\rt \geq |H|$.
\end{proof}

It is clear that Corollary \ref{cor.gldim} also applies to higher degree
classical GWAs under the action of $\Theta_\beta$.
As another application of Theorem \ref{thm.fixed}, we consider the
Calabi-Yau property for fixed rings of classical GWAs with $\deg_z(a)=2$.

For an algebra $A$, we denote the {\sf enveloping algebra} of $A$ by
$A^e=A \tensor A^{op}$, where $A^{op}$ is the opposite algebra of $A$.
The algebra $A$ is {\sf homologically smooth} if it it has a projective
resolution of finite length in $A^e$.
If, further, there exists $d \in \NN$ such that
$\Ext_{A^e}^i(A,A^e) \iso \delta_{ij} A$, where $\delta_{ij}$ is the Kronecker-delta function,
then $A$ is said to be {\sf Calabi-Yau of dimension $d$}.

By a result of Liu, a classical GWA $R=\kk[z][x,y;\sigma,a]$
is Calabi-Yau if and only if $R$ has finite global dimension \cite[Theorem 1.1]{L}.
The $\Ext$ condition holds for all classical GWAs, but Liu proves
that the finite global dimension hypothesis implies homological smoothness.
It is noted in the discussion that for a homologically smooth algebra $A$,
the Calabi-Yau dimension is bounded below by the global dimension.
Consequently, if $\gldim(R)=\infty$, then $R$ is not homologically smooth.
The next result now follows from the previous corollary.

\begin{corollary}
\label{cor.CY}
Let $R=\kk[z][x,y;\sigma,a]$ be a classical GWA with $a=z(z-\rt)$
and let $H \subset \af(R)$ be a finite cyclic group with $|H|>2$.
Then $R^H$ is Calabi-Yau if and only if $R$ is Calabi-Yau
and either $\gldim(R)=1$ or $\gldim(R)=2$ and $|\rt| \geq |H|$.
\end{corollary}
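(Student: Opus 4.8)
The plan is to reduce the statement to finiteness of global dimension and then read off the answer from Corollary~\ref{cor.gldim}. First I would note that by Theorem~\ref{thm.fixed} the fixed ring $R^H$ is itself a classical GWA---indeed of the explicit form $\kk[Z][X^\ell,Y^\ell;\varsigma^\ell,A(Z)]$ produced in that proof, which is classical since $\varsigma^\ell(Z)=Z-\ell$ with $\ell\neq 0$---so Liu's characterization \cite[Theorem 1.1]{L} applies both to $R$ and to $R^H$. This yields the two equivalences ``$R$ is Calabi-Yau $\iff \gldim(R)<\infty$'' and ``$R^H$ is Calabi-Yau $\iff \gldim(R^H)<\infty$,'' using the observation from the preceding discussion that infinite global dimension forces failure of homological smoothness, hence of the Calabi-Yau property.

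I would then invoke Corollary~\ref{cor.gldim} to translate finiteness of $\gldim(R^H)$ into conditions on $R$ and on $\rt$. Running through its three cases: when $\gldim(R)=\infty$ one gets $\gldim(R^H)=\infty$; when $\gldim(R)=1$ one gets $\gldim(R^H)=1$; and when $\gldim(R)=2$ (so $\rt\in\ZZ$) one gets $\gldim(R^H)$ finite exactly when $|\rt|\geq|H|$ and infinite otherwise. Hence $\gldim(R^H)<\infty$ holds precisely when $\gldim(R)=1$, or $\gldim(R)=2$ and $|\rt|\geq|H|$.

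Finally I would reconcile this with the stated form of the conclusion. Since a classical GWA has finite global dimension exactly when $\gldim\in\{1,2\}$, the clause ``$R$ is Calabi-Yau'' is equivalent to $\gldim(R)<\infty$ and is automatically satisfied in both of the surviving cases; intersecting it with the dichotomy from the previous step reproduces verbatim the asserted condition. I do not expect a genuine obstacle here, as the corollary is purely an assembly of Theorem~\ref{thm.fixed}, Liu's theorem, and Corollary~\ref{cor.gldim}. The only point requiring a little care is the bookkeeping across the two applications of Liu's theorem and the three-way split of Corollary~\ref{cor.gldim}, together with checking that the hypothesis $|H|>2$ carries over so that Corollary~\ref{cor.gldim} applies without modification.
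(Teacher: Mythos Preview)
Your proposal is correct and follows exactly the route the paper takes: the paper's entire proof is the one sentence ``The next result now follows from the previous corollary,'' relying on Liu's criterion (discussed just before the statement) together with Corollary~\ref{cor.gldim}, and you have simply spelled out those steps in detail.
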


In most cases, we are unable to say whether the fixed ring of a classical GWA by a 
non-cyclic group of filtered automorphisms is a GWA. However, we can in one special case.

\begin{corollary}
Let $R=\wa$ and $H=\grp{\Theta_{-1},\Omega} \subset \af(R)$.
Then $R^H$ is a classical GWA.
\end{corollary}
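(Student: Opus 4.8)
The plan is to recognize that, despite being presented with two generators, $H$ is in fact cyclic, and then to invoke Proposition \ref{prop.wa} directly. First I would record the explicit form of the two generators when $R=\wa$ is realized as a degree-one classical GWA $\kk[z][x,y;\sigma,z]$. Here $a=z$ is reflective, since $a(0-z)=-z=(-1)^1 a(z)$, so $\rho=0$ and $\Omega$ is given by $\Omega(x)=y$, $\Omega(y)=-x$, $\Omega(z)=1-z$. A short computation on the generators then yields
\begin{align*}
\Omega^2(x) &= \Omega(y) = -x, &
\Omega^2(y) &= \Omega(-x) = -y, &
\Omega^2(z) &= \Omega(1-z) = z,
\end{align*}
so that $\Omega^2=\Theta_{-1}$. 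Consequently $\Theta_{-1}\in\grp{\Omega}$, and therefore $H=\grp{\Theta_{-1},\Omega}=\grp{\Omega}$ is cyclic. Iterating once more (or squaring the relation $\Omega^2=\Theta_{-1}$ and using $\Theta_{-1}^2=\id$) shows $\Omega^4=\id$, so $H$ has order four.

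With cyclicity established, the result is immediate. Since $\Omega$ preserves the standard filtration on $\wa$ (it sends $x,y$ to the linear combinations $y,-x$, and $z\mapsto 1-z$ has degree two), we have $\Omega\in\af(\wa)$, and it has finite order. Applying Proposition \ref{prop.wa} to the generator $g=\Omega$ gives that $\wa^{\grp{\Omega}}=R^H$ is a classical GWA.

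The only point requiring care—and the closest thing to an obstacle—is the verification that $H$ really is generated by $\Omega$ alone; this is exactly the relation $\Omega^2=\Theta_{-1}$ above, which collapses the apparently two-generator group onto a cyclic one and thereby brings the problem within the scope of Proposition \ref{prop.wa}. (Alternatively, the commuting relation $\Omega\circ\Theta_{-1}=\Theta_{-1}\circ\Omega$ from Proposition \ref{prop.gmaps}, taken with $\beta=-1$, confirms that $H$ is abelian, but the stronger identity $\Omega^2=\Theta_{-1}$ is what we actually need.) Everything else is a direct appeal to an already-proved statement, so no further machinery is required.
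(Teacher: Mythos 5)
Your proof is correct, but it takes a genuinely different route from the paper. The paper argues by iterated invariants: it first applies Proposition \ref{prop.wa} to get that $\wa^{\grp{\Theta_{-1}}}$ is a classical GWA with defining polynomial of degree $2$, observes that $\Omega$ induces a filtered automorphism of this fixed ring, writes $R^H = \bigl(R^{\grp{\Theta_{-1}}}\bigr)^{\grp{\Omega}}$, and then invokes Theorem \ref{thm.fixed} for the induced (finite-order) action. You instead collapse the group itself: with $\wa$ presented as $\kk[z][x,y;\sigma,z]$ (so $a=z$ is monic with $0$ a root, $n=1$ odd, $\rho=0$), your computation $\Omega^2=\Theta_{-1}$ is right, so $H=\grp{\Omega}\iso C_4$ and a single application of Proposition \ref{prop.wa} finishes the argument; your verification that $\Omega\in\af(\wa)$ is also needed and correct. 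This is consistent with the paper's own Theorem \ref{thm.refgrp}, whose odd-$n$ classification gives $\grp{\Theta_\beta\circ\Omega}\iso C_4$ (there stated for $n\geq 3$, but the relevant relations from Proposition \ref{prop.gmaps} hold whenever $a$ is reflective). What each approach buys: yours is shorter, avoids Theorem \ref{thm.fixed} entirely, and exposes a point the paper's framing obscures --- the sentence preceding the corollary presents $H$ as a ``non-cyclic'' example, but as you show it is in fact cyclic of order four, so the corollary actually falls within the already-settled cyclic case. The paper's two-step argument, by contrast, illustrates the iterated-invariants technique, which is the tool one would genuinely need for subgroups of $\af(R)$ that do not collapse to a cyclic group.
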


\begin{proof}
By Proposition \ref{prop.wa}, $R^{\grp{\Theta_{-1}}}$ is a classical GWA of degree 2. 
Since $\Omega$ is a filtered automorphism on $R^{\grp{\Theta_{-1}}}$, then
$R^H = \left(R^{\grp{\Theta_{-1}}}\right)^{\grp{\Omega}}$. 
The result now follows from Theorem \ref{thm.fixed}.
\end{proof}

\section{The case $n \geq 3$}
Let $R=\kk[z][x,y;\sigma,a]$ with $n = \deg(a) \geq 3$. By Theorem \ref{thm.ngeq3}, if $a$ is not reflective then $\af(R)$ is generated by the maps $\Theta_{\beta}$ and if $a$ is reflective than $\af(R)$ is generated by $\Omega$ and the maps $\Theta_{\beta}$. In the former case, any finite subgroup $H$ of $\af(R)$ will be cyclic, generated by some $\Theta_{\beta}$ where $\beta$ is a root of unity. In this case, by \cite[Theorem 2.7]{JW}, $R^H$ is again a generalized Weyl algebra.

We now study the finite subgroups of $\af(R)$ when $a$ is reflective.
\begin{theorem}
\label{thm.refgrp}
Let $R = \kk[z][x,y;\sigma,a]$ be a classical GWA with $a\in \kk[z]$ a reflective polynomial of degree $n \geq 3$. Let $H$ be a nontrivial finite subgroup of $\af(R)$. If $n$ is even, then one of the following holds:
\begin{enumerate}
\item \label{case.c2} $H = \grp{\Theta_{\beta} \circ \Omega}$ for some $\beta \in \kk$ and $H \cong C_2$, 
\item \label{case.cyc} $H= \grp{\Theta_{\lambda}}$ for some $\lambda \in \kk$ an $m$th root of unity and $H \cong C_m$, or
\item \label{case.dih} $H = \grp{\Theta_{\beta} \circ \Omega, \Theta_{\lambda}}$ for some $\beta, \lambda \in \kk$ with $\lambda$ an $m$th root of unity and $H \cong D_{2m}$, the dihedral group of order $2m$.
\end{enumerate}
If $n$ is odd, then one of the following holds:
\begin{enumerate}
\item \label{case.c4} $H = \grp{\Theta_{\beta} \circ \Omega}$ for some $\beta \in \kk$ and $H \cong C_4$, 
\item \label{case.cyc2} $H= \grp{\Theta_{\lambda}}$ for some $\lambda \in \kk$ an $m$th root of unity and $H \cong C_m$, or
\item \label{case.dih2} $H = \grp{\Theta_{\beta} \circ \Omega, \Theta_{\lambda}}$ for some $\beta, \lambda \in \kk$ with $\lambda$ an $2m$th root of unity and $H \cong \operatorname{Dic}_{m}$, the binary dihedral group of order $4m$.
\end{enumerate}
\end{theorem}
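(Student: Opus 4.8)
The plan is to realize $\af(R)$ as a group with a canonical abelian subgroup of index two and then to classify finite subgroups by how they meet that subgroup. By Theorem \ref{thm.ngeq3}, $\af(R)=\grp{\Omega,\Theta_\lambda:\lambda\in\kk^\times}$. Set $N=\grp{\Theta_\lambda:\lambda\in\kk^\times}$. By part (3) of Proposition \ref{prop.gmaps} the assignment $\lambda\mapsto\Theta_\lambda$ is an injective homomorphism $\kk^\times\to\Aut(R)$, so $N\iso\kk^\times$ is abelian. Relation (4), $\Omega\circ\Theta_\beta=\Theta_{\beta\inv}\circ\Omega$, shows $\Omega N\Omega\inv=N$, so $N$ is normal; using this relation to move every occurrence of $\Omega$ to the right, I would show that each element of $\af(R)$ has the form $\Theta_\beta$ or $\Theta_\beta\circ\Omega$, whence $[\af(R):N]=2$ with $\af(R)/N$ generated by the image of $\Omega$. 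The first quantity to record is $\Omega^2$: applying $\Omega$ twice to $x,y,z$ gives $\Omega^2=\id$ when $n$ is even and $\Omega^2=\Theta_{-1}$ when $n$ is odd, and this dichotomy governs the two halves of the statement.

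Next I would use that any finite subgroup of $N\iso\kk^\times$ is cyclic, since $\kk$ is algebraically closed of characteristic zero and the finite subgroups of $\kk^\times$ are exactly the groups of roots of unity. Thus for a nontrivial finite $H\leq\af(R)$ the intersection $H\cap N$ is cyclic, say $H\cap N=\grp{\Theta_\lambda}$ with $\lambda$ a root of unity. If $H\subseteq N$, then $H=\grp{\Theta_\lambda}\iso C_m$, which is case \eqref{case.cyc} (resp. \eqref{case.cyc2}). Otherwise $H$ meets the nontrivial coset $N\Omega$, so $H$ contains an element $\sigma=\Theta_\beta\circ\Omega$, and the index-two structure forces $[H:H\cap N]=2$.

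The core of the argument is the analysis of such a $\sigma$. A direct computation with relation (4) gives $\sigma^2=(\Theta_\beta\circ\Omega)^2=\Omega^2$ and the conjugation relation $\sigma\circ\Theta_\lambda\circ\sigma\inv=\Theta_{\lambda\inv}=\Theta_\lambda\inv$. When $n$ is even, $\sigma^2=\id$, so $\sigma$ has order two; taking $\lambda$ a primitive $m$th root of unity, the relations $\Theta_\lambda^{m}=1$, $\sigma^2=1$, and $\sigma\Theta_\lambda\sigma\inv=\Theta_\lambda\inv$ are the dihedral presentation, giving $H\iso D_{2m}$ (case \eqref{case.dih}), which degenerates to $H=\grp{\sigma}\iso C_2$ when $m=1$ (case \eqref{case.c2}). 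When $n$ is odd, $\sigma^2=\Theta_{-1}$ has order two in $N$, so $\Theta_{-1}\in H\cap N$ automatically; hence $H\cap N$ has even order, and I would write $H\cap N=\grp{\Theta_\lambda}\iso C_{2m}$ with $\lambda$ a primitive $2m$th root of unity, so that $\Theta_\lambda^{m}=\Theta_{-1}=\sigma^2$. The relations $\Theta_\lambda^{2m}=1$, $\sigma^2=\Theta_\lambda^{m}$, and $\sigma\Theta_\lambda\sigma\inv=\Theta_\lambda\inv$ are exactly the presentation of the binary dihedral (dicyclic) group, so $H\iso\operatorname{Dic}_{m}$ of order $4m$ (case \eqref{case.dih2}); the minimal instance $m=1$, where $H=\grp{\sigma}$ and $\operatorname{Dic}_1\iso C_4$, is recorded separately as case \eqref{case.c4}.

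The main obstacle I anticipate is organizing the odd case correctly. Unlike the even case, here $\Omega$ itself has order four with $\Omega^2\in N$, so a subgroup meeting $N\Omega$ always contains $\Theta_{-1}$ and therefore always has nontrivial intersection with $N$; one must check that this forces the order of $\lambda$ to be even and that $\sigma^2$ coincides with the central involution $\Theta_\lambda^{m}$ of $H\cap N$, so that the abstract isomorphism type is genuinely dicyclic rather than dihedral. Matching the standard presentation of $\operatorname{Dic}_{m}$, and correctly tracking orders so that $\lambda$ is a $2m$th (rather than $m$th) root of unity in the theorem's indexing, is the one place where care is needed. The remaining verifications—the normal form for elements of $\af(R)$ and the two short conjugation computations—are routine consequences of Proposition \ref{prop.gmaps}.
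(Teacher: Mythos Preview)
Your proposal is correct and follows essentially the same approach as the paper: both arguments put every element of $\af(R)$ into the normal form $\Theta_\beta$ or $\Theta_\beta\circ\Omega$, identify the cyclic subgroup of $\Theta$'s inside $H$, and then use the conjugation relation $\sigma\Theta_\lambda\sigma\inv=\Theta_{\lambda\inv}$ together with the computation of $(\Theta_\beta\circ\Omega)^2=\Omega^2$ to read off the dihedral or dicyclic presentation. Your framing via the index-two normal subgroup $N\iso\kk^\times$ is a slightly cleaner packaging of the same computations, and your explicit treatment of the odd case (forcing $\Theta_{-1}\in H\cap N$ and matching $\sigma^2=\Theta_\lambda^{m}$) fills in what the paper leaves as ``similar.''
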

\begin{proof}Using Proposition \ref{prop.gmaps}, each element of $\af(R)$ can be written as either $\Theta_{\beta}$ or $\Theta_{\beta} \circ \Omega$ for some $\beta \in \kk$. The automorphisms $\Theta_{\beta} \circ \Omega$ have order $2$ if $n$ is even and order $4$ if $n$ is odd. The automorphisms $\Theta_{\beta}$ have finite order if and only if $\beta$ is a root of unity.

We consider the case that $n$ is even. The case of $n$ odd is similar. Let $H$ be a finite subgroup of $\af(R)$. Suppose first that $H$ does not contain $\Theta_{\beta}$ for any $\beta \neq 1$. Because $\Theta_{\beta} \circ \Omega \circ \Theta_{\gamma} \circ \Omega = \Theta_{\beta \gamma \inv}$, this means that $H$ is generated by a single $\Theta_{\beta} \circ \Omega$ and we are in case \ref{case.c2}.

Now if $H$ contains $\Theta_{\beta}$, then $\beta$ must be a root of unity. Consider the subgroup of $C$ of $H$ consisting of elements of the form $\Theta_{\beta}$. Since $\Theta_{\beta} \circ \Theta_{\gamma} = \Theta_{\beta \gamma}$, $C$ is generated by a single $\Theta_{\lambda}$ where $\lambda$ is an $m$th root of unity. If $H = C$, then we are in case \ref{case.cyc}.

So now suppose that $H \neq C$, so $H$ contains some $\Theta_{\beta} \circ \Omega$. If $\Theta_{\gamma} \circ \Omega \in H$, then since $\Theta_{\beta} \circ \Omega \circ \Theta_{\gamma} \circ \Omega = \Theta_{\beta \gamma \inv}$, we must have that $\beta \gamma\inv = \lambda^j$ for some $0 \leq j < m$, so $\gamma = \beta \lambda^{-j}$ and hence $\Theta_{\gamma} \circ \Omega = \Theta_{\beta} \circ \Omega \circ \Theta_{\lambda^j}$, and we are in case \ref{case.dih}. Therefore, $H$ is generated by $\Theta_{\beta} \circ \Omega$ and $\Theta_{\lambda}$. Finally, 
\[ \Theta_{\beta} \circ \Omega \circ \Theta_{\lambda} = \Theta_{\lambda}\inv \circ \Theta_{\beta} \circ \Omega
\]
so $H \cong D_{2m}$, as claimed.
\end{proof}


The classical GWA $R = \kk[z][x,y;\sigma, a]$ is naturally $\ZZ$-graded by letting $\deg x = 1$, $\deg y = -1$ and $\deg f = 0$ for all $f \in \kk[z]$. Under this grading, the maps $\Theta_{\beta}$ are graded automorphisms and the map $\Omega$ reverses the grading on $R$. In what follows, we exploit this $\ZZ$-grading on $R$.

\begin{theorem} 
\label{thm.n3omega}
Let $R = \kk[z][x,y; \sigma, a]$ be a classical GWA with $n=\deg_z(a)$. Suppose that $a$ is a reflective polynomial so there exists $\rho \in \kk$ with $a(\rho - z) = (-1)^n a(z)$. If $n$ is even, then $R^{\grp{\Theta_{\beta} \circ \Omega}}$ is generated over $\kk[z(1+\rho-z)]$ by $x+\beta y$ and $zx + \beta (1+\rho-z)y$.
If $n$ is odd, then $R^{\grp{\Theta_{\beta} \circ \Omega}}$ is generated over $\kk[z(1+\rho-z)]$ by $x^2 + \beta^2 y^2$ and $zx^2 + \beta^2 (1+ \rho - z)y^2$.
\end{theorem}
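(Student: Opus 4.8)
The plan is to exploit the $\ZZ$-grading on $R$ (with $\deg x = 1$, $\deg y = -1$, $\deg f = 0$ for $f \in \kk[z]$) recorded just before the statement. First I would write out the action of $g = \Theta_\beta \circ \Omega$ on generators, obtaining $g(x) = \beta y$, $g(y) = (-1)^n\beta\inv x$, and $g(z) = 1+\rho-z$; in particular $g$ is homogeneous of degree $-1$, interchanging $R_d$ and $R_{-d}$, where $R_d = \kk[z]x^d$ and $R_{-d} = \kk[z]y^d$ for $d>0$ and $R_0 = \kk[z]$. A direct check gives $g^2 = \id$ when $n$ is even and $g^2 = \Theta_{-1}$ when $n$ is odd. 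I treat the even case first, where $g$ is a grading-reversing involution. Since $g(R_d)=R_{-d}$, an element $r=\sum_d r_d$ is fixed exactly when $g(r_d)=r_{-d}$ for every $d$, so
\[ R^{\grp{g}} = (R_0)^{\grp{g}} \oplus \bigoplus_{d\geq 1}\{\, r + g(r) : r \in R_d \,\}. \]
The degree-zero piece consists of the polynomials in $z$ fixed by the involution $z\mapsto 1+\rho-z$, and completing the square identifies this ring as $\kk[c]$, where $c := z(1+\rho-z)$.

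Next I would pin down the degree-one part and the two generators. Every element of $R_1$ is $p(z)x$, with $p(z)x + g(p(z)x) = p(z)x + \beta\,p(1+\rho-z)y$. Because $c$ has degree $2$ in $z$, the ring $\kk[z]$ is free of rank two over $\kk[c]$ with basis $\{1,z\}$; writing $p(z)=f(c)+z\,g_0(c)$ and applying the involution $z\mapsto 1+\rho-z$ (under which $c$ is fixed) to this identity, one checks that
\[ p(z)x + \beta\,p(1+\rho-z)y = f(c)\,u + g_0(c)\,v, \qquad u = x+\beta y,\quad v = zx+\beta(1+\rho-z)y. \]
Thus the degree-$(\pm1)$ fixed part is exactly $\kk[c]\,u \oplus \kk[c]\,v$. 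To reach higher pieces I argue by induction on $d$: assuming everything of degree $\le d$ lies in the subalgebra $S$ generated by $c,u,v$, I multiply a degree-$d$ fixed element $s=p(z)x^d + g(\cdots)$ by $u$. Using the relation $x\,p(z)=p(z-1)x$, the top ($R_{d+1}$) component of $us$ is $p(z-1)x^{d+1}$, which sweeps out all of $R_{d+1}$ as $p$ varies, while the remaining component of $us$ has degree $d-1$ and hence lies in $S$. This forces every degree-$(d+1)$ fixed element into $S$, so $S = R^{\grp{g}}$.

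For $n$ odd I would reduce to the even case. Here $g^2=\Theta_{-1}$, so $R^{\grp{g}} = (R^{\grp{\Theta_{-1}}})^{\grp{g}}$, and by \cite[Theorem 2.7]{JW} the intermediate fixed ring $R^{\grp{\Theta_{-1}}} = \kk[z][x^2,y^2;\sigma^2,a\,\sigma\inv(a)]$ is again a classical GWA, carrying the rescaled $\ZZ$-grading $\deg x^2=1$, $\deg y^2=-1$. On it $g$ restricts to an involution with $g(x^2)=\beta^2 y^2$, $g(y^2)=\beta^{-2}x^2$, $g(z)=1+\rho-z$, which is precisely the shape handled in the even case with $(x^2,y^2,\beta^2)$ in place of $(x,y,\beta)$. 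Rerunning the identical argument yields the generators $x^2+\beta^2 y^2$ and $zx^2+\beta^2(1+\rho-z)y^2$ over $\kk[c]$, as claimed.

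I expect the generation step in the induction to be the main obstacle rather than the bookkeeping: since $g$ reverses the grading, $R^{\grp{g}}$ is only filtered, not graded, so products of the proposed generators acquire lower-degree tails and one must argue that these tails are absorbed by $S$ while the top terms exhaust each graded piece. The clean identity $x\,p(z)=p(z-1)x$ — surjectivity of left multiplication by $x$ from $R_d$ onto $R_{d+1}$ — is exactly what lets the single generator $u$ climb degrees, with $v$ needed only to fill out the rank-two degree-one piece. The one computational point demanding care is verifying that the displayed elements are genuinely $g$-invariant and that they correspond to the basis $\{1,z\}$ of $\kk[z]$ over $\kk[c]$.
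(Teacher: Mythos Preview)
Your proposal is correct and, for even $n$, follows the same route as the paper: use the grading reversal to write every invariant as a sum of symmetrized pieces $f(z)x^d + \beta^d f(1+\rho-z)y^d$, identify the degree-zero part with $\kk[c]$ via the basis $\{1,z\}$ of $\kk[z]$ over $\kk[c]$, and climb degrees by multiplying by $u=x+\beta y$. The paper organizes the induction slightly differently---it first reduces each degree-$j$ piece to the two special elements $x^j+\beta^j y^j$ and $zx^j+\beta^j(1+\rho-z)y^j$ and then computes the cross-terms of $(x^{j-1}+\beta^{j-1}y^{j-1})(x+\beta y)$ explicitly, invoking reflectivity of $a$ to recognize them as an invariant of degree $j-2$---whereas you bypass this by letting $p$ remain arbitrary and observing only that $us$ is fixed with non-top part of degree $d-1$; your packaging is a little cleaner since the surjectivity of $p\mapsto p(z-1)$ does the work and reflectivity is absorbed into ``$g$ is an automorphism.'' The one genuine difference is the odd case: the paper says only that the argument is similar, while you reduce to the even case via $R^{\grp{g}} = (R^{\grp{\Theta_{-1}}})^{\grp{\bar g}}$ and \cite{JW}, which lets you rerun the even argument verbatim on $\kk[z][x^2,y^2;\sigma^2,a\,\sigma\inv(a)]$ with $\beta^2$ in place of $\beta$. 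Both approaches are valid; yours avoids repeating the induction.
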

\begin{proof}By using the relations in $R$, each element $r$ of $R$ can be written 
\[ r = \sum_{i=0}^m f_i(z) x^i + \sum_{i=1}^m g_i(z) y^i
\]
for some $f_i(z),g_i(z) \in \kk[z]$. Since $\Theta_{\beta} \circ \Omega$ reverses the $\ZZ$-grading on $R$, if $r$ is fixed by $\Theta_{\beta} \circ \Omega$, then we must have, for each $1 \leq i \leq m$,
\[ \Theta_{\beta} \circ \Omega(f_i(z)x^i) = g_i(z)y^i \quad \text{and} \quad \Theta_{\beta} \circ\Omega(g_i(z)y^i) = f_i(z)x^i
\]
and hence
\[ \beta^{-i} f_i(1 + \rho -z) = g_i(z) \quad \text{and} \quad (-1)^{ni} \beta^{-i} f_i(1+ \rho -z) = g_i(z).
\]
Hence, the only nonzero summands of $r$ occur when $ni$ is even.

In particular, when $i = 0$, we must have that $f_0(1 + \rho -z) = f_0(z)$. By an induction argument, each $g(z) \in \kk[z]$ can be written as $h_1 + z h_2$ for some $h_1, h_2 \in \kk[z(1+\rho-z)]$. Hence, if $g(1+\rho-z) = g(z)$, then $h_2 = 0$ so $f_0 \in \kk[z(1+\rho-z)]$.

If $n$ is even, then each invariant is a sum of terms of the form $f(z)x^{m} + \beta^{m} f(1+\rho - z) y^{m}$ where $m \geq 0$ and $f(z) \in \kk[z]$. We claim that each of these elements is generated over $\kk[z(1+\rho-z)]$ by $x+\beta y$ and $zx + \beta (1+\rho-z)y$. Since each $f(z) \in \kk[z]$ can be written as $h_1 + z h_2$ for some $h_1, h_2 \in \kk[z(1+\rho-z)]$, therefore we can write any 
\[ f(z) x^j + \beta^j f(1+\rho - z) y^j
\]
as a $\kk[z(1+\rho-z)]$-linear combination of $x^j+\beta^j y^j$ and $zx^j + \beta^j(1+\rho-z)y^j$.

It therefore suffices to show that for any $j \geq 0$, we can generate any $x^j + \beta^j y^j$ and $z x^j + \beta^j(1+\rho-z)y^j$. Now observe that 
\[ \left( x^{j-1} + \beta^{j-1}y^{j-1}\right) (x+\beta y) = x^j + \beta x^{j-1}y + \beta^{j-1} y^{j-1}x + \beta^{j} y^{j}
\]
and since $a(z) = a(\rho-z)$,
\begin{align*} 
\beta x^{j-1}y + \beta^{j-1} y^{j-1}x 
&= \beta x^{j-2}a(z-1)  +\beta^{j-1}  y^{j-2}a(z) \\
&= \beta a(z-j+1)x^{j-2}+ \beta^{j-1} a(z+j-2)y^{j-2} \\
&= \beta a(z-j+1)x^{j-2} + \beta^{j-1} a(\rho - (z +j-2)) y^{j-2} \\
&= \beta \left[ a(z-j+1)x^{j-2} +  \beta^{j-2} a((1 + \rho - z) -j + 1)) y^{j-2} \right]
\end{align*}
so by induction we can generate any $x^j + \beta^{j} y^j$. By a similar argument, we can generate any $z x^j + \beta^{j} (1+\rho-z)y^j$. Therefore, the invariant ring has the claimed generators. The proof when $n$ is odd is similar.
\end{proof}

\begin{corollary}
\label{cor.n3omega}
Let $R = \kk[z][x,y; \sigma, a]$ be a classical GWA with $n=\deg_z(a)$ and $a$ reflective. 
Let
\[
A = \begin{cases}
x+\beta y		&	\text{$n$ even} \\
x^2+\beta^2 y^2	&	\text{$n$ odd,}
\end{cases}
\qquad
B = \begin{cases}
zx+\beta (1+\rho-z)y		&	\text{$n$ even} \\
zx^2+\beta^2(1+\rho-z)y^2	&	\text{$n$ odd,}
\end{cases}
\]
and $C = z(1 + \rho - z)$ so that $A$, $B$, and $C$ generate $R^{\grp{\Omega}}$.
If $n$ is even, then the generators satisfy the following relations
\begin{align*}
[A,C] &= 2B-(2+\rho)A, &	[B,A] &= A^2 + \beta f(C), \\
[B,C] &= \rho B-2CA, &	B^2 &= \rho BA-CA^2 + \beta g(C).
\end{align*}
If $n$ is odd, then the generators satisfy the following relations
\begin{align*}
[A,C] &= 4B-2(3+\rho)A, 	&	[B,A] &= 2A^2 + \beta f(C), \\
[B,C] &= 2(\rho-1)B-4CA, 	&	B^2 &= (\rho-1)BA-CA^2 + \beta g(C).
\end{align*}
In both cases, $f(C)$ and $g(C)$ represent polynomials in $C$ with
\[
\deg_C(f) = \begin{cases}
n	&	\text{$n$ even} \\
2n	&	\text{$n$ odd,}
\end{cases}
\quad\text{and}\quad
\deg_C(g) = \begin{cases}
1+\frac{n}{2}	&	\text{$n$ even} \\
2n+1			&	\text{$n$ odd.}
\end{cases}
\]
\end{corollary}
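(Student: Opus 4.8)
The plan is to establish every relation by a direct computation in $R$, reducing each expression to the normal form $\sum_i f_i(z)x^i+\sum_i g_i(z)y^i$ via the rules $x\,p(z)=p(z-1)x$, $y\,p(z)=p(z+1)y$, $xy=a(z-1)$, and $yx=a(z)$. The even and odd cases run in parallel: for $n$ odd the generators are built from $x^2$ and $y^2$, so the relevant shifts are by $2$ (e.g.\ $x^2 p(z)=p(z-2)x^2$ and $x^2y^2=a(z-2)a(z-1)$), which is exactly what produces the extra factors of $2$ in the odd relations, and reflectivity contributes the sign $(-1)^n=-1$.

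First I would dispatch the two commutator relations $[A,C]$ and $[B,C]$. Expanding and collecting coefficients, $[A,C]$ has no degree-zero part and equals $(C(z-1)-C(z))x+\beta(C(z+1)-C(z))y$; since $C(z-1)-C(z)=2z-(\rho+2)$ and $C(z+1)-C(z)=\rho-2z$, this matches the $x$- and $y$-coefficients of $2B-(2+\rho)A$ on the nose, and the analogous bookkeeping gives $[B,C]=\rho B-2CA$. These two relations involve no $f$ or $g$, so they are purely a coefficient match.

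The content is in the two quadratic relations. Here I would compute $[B,A]$ and $B^2$ in normal form and subtract the predicted leading part: one finds that the $x^2$- and $y^2$-coefficients of $[B,A]-A^2$, and of $B^2-\rho BA+CA^2$, both vanish, so each difference equals $\beta$ times a single polynomial $q(z)\in\kk[z]$ (for $f$ one gets $q=(2z-3-\rho)a(z-1)-(2z+1-\rho)a(z)$, and a similar combination with quadratic coefficients for $g$). To see $q\in\kk[C]$ I would argue structurally: $A,B,C$ are invariant, so the difference is an invariant lying in the degree-zero component $\kk[z]$, on which the automorphism restricts to the involution $\iota\colon z\mapsto 1+\rho-z$; since $\kk[z]$ is free of rank two over $\kk[z]^{\langle\iota\rangle}=\kk[C]$, any $\iota$-invariant polynomial lies in $\kk[C]$. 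Equivalently one checks this by hand from reflectivity: $a(\rho-z)=(-1)^na(z)$ yields $a(1+\rho-z)=(-1)^na(z-1)$, and substituting $z\mapsto 1+\rho-z$ simply interchanges the two summands of $q$.

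Finally the degrees of $f$ and $g$ come from a leading-term analysis of these explicit polynomials: because $a(z)$ and $a(z-1)$ have equal leading coefficients, the top $z$-power of each summand cancels, so the degree is governed by the next term, and dividing by $\deg_z C=2$ passes from $\deg_z$ to $\deg_C$. I expect the main obstacle to be organizational rather than conceptual---keeping the various $z$-shifted copies of $a$ straight in the expansions of $B^2$ and $[B,A]$---with the one genuinely load-bearing step being the verification, via the reflective symmetry, that the residual polynomials land in $\kk[C]$ at all.
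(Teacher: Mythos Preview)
Your approach matches the paper's: direct expansion in normal form for $[A,C]$ and $[B,C]$, then for $[B,A]$ and $B^2$ isolating the same residual polynomial in $z$ (the paper obtains exactly your $q=(2z-3-\rho)\sigma(a)+(\rho-1-2z)a$) and using the involution $z\mapsto 1+\rho-z$ to place it in $\kk[C]$; the paper checks this by direct substitution rather than by your structural observation that degree-zero invariants lie in $\kk[z]^{\langle\iota\rangle}=\kk[C]$, but it is the same idea. One small correction to your degree sketch: the leading-term cancellation you describe occurs for $f$ (the $2z\cdot z^n$ terms do cancel) but not for $g$, where the two summands both carry $-2z^2$ in front of $a$ and $\sigma(a)$ and hence add rather than cancel at top degree---you would see this once you write out the polynomial explicitly, and the paper likewise does not verify the degree claims beyond exhibiting the residual polynomials.
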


\begin{proof}
Assume $n$ is even. The case of $n$ odd is similar.
This is largely direct computation and we omit those for $[A,C]$ and $[B,C]$. Next we have,
\begin{align*}
[B,A] &= [zx+\beta(1+\rho-z)y,x+\beta y] \\
	&= [z,x]x + \beta[1+\rho-z,y]y + \beta[zx,y] + \beta^2[(1+\rho-z)y,x] \\
    &= (x^2+\beta^2 y^2) + \beta(\rho-2z)yx + \beta(2z-2+\rho)xy \\
    &= A^2 + \beta\left((\rho-1-2z)a + (2z-3-\rho)\sigma(a)\right).
\end{align*}
Observe that $(\rho-1-2z)a + (2z-3-\rho)\sigma(a) \in \kk[z]$ and
\[ \Omega((\rho-1-2z)a + (2z-3-\rho)\sigma(a))
	= (2z-3-\rho)\sigma(a)+(\rho-1-2z)a.
\]
Hence, it must be possible to express this as a polynomial in $C$.   
Finally we have
\[
B^2 = \rho BA-CA^2 + \beta((3+\rho)z-2z^2)\sigma(a) + \beta((1-\rho^2)+(3\rho +1)z-2z^2)a.
\]
As in the computation for $[B,A]$, the remaining polynomial in $z$ is fixed by $\Omega$.
\end{proof}

It is not clear to us whether $R^{\grp{\Omega}}$ is a GWA for $\deg_z(a) \geq 3$.
One piece of evidence against is the following.
A classical GWA $R=\kk[z][x,y;\sigma,a]$ 
with $\deg_z(a)=2$ can be presented with two generators
by solving the relation $yx-xy=a-\sigma(a)$ for $z$ and substituting into the other relations.
When $\deg_z(a)>2$, one cannot generate $R$ using only $x$ and $y$,
but whether one can use a different pair of generators for $R$ is unclear.
We would expect that, were $R^{\grp{\Omega}}$ to be a classical GWA for $\deg_z(a) \geq 3$, 
then the degree of the corresponding defining polynomial would be higher and thus
not able to be presented with two generators.
However, one observes from Corollary \ref{cor.n3omega} that it is possible to take
\[
B = \begin{cases}
\frac{1}{2}\left( AC-CA + (2+\rho)A\right)	&	\text{$n$ even} \\
\frac{1}{4}\left( AC-CA + 2(3+\rho)A\right)	&	\text{$n$ odd}.
\end{cases}
\]

\section{Auslander's Theorem}
\label{sec.auslander}

In this final section we consider Auslander's theorem.
As stated in the introduction, it is sufficient in many cases to show that $\p(A,G) \geq 2$
for an algebra $A$ and a group $G$ acting on $A$.
In particular, by various results in \cite{BHZ2,BHZ1}, this applies when
\begin{enumerate}
\item $A$ is noetherian, connected graded AS regular, and Cohen-Macaulay of GK dimension at least two, and $G$ is a group acting linearly on $A$;
\item $A$ is a noetherian PI and Kdim-CM algebra of Krull dimension at least 2;
\item $A$ is {\it congenial} and $G$ preserves the filtration on $A$.
\end{enumerate}

Our focus will be on the last condition.
We refer to \cite{BHZ2} for a full definition.

\begin{lemma}
\label{lem.cntp}
Suppose $F$ is a field of characteristic $p > 0$ and $R=F[z][x,y;\sigma,a]$ a classical GWA.
Then $F[x^p,y^p] \subset \cnt(R)$.
\end{lemma}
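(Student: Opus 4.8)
The plan is to show that each of $x^p$ and $y^p$ commutes with all three generators $x$, $y$, and $z$ of $R$, since these suffice to generate $R$ as an $F$-algebra. The key structural fact driving everything is the classical GWA relation $\sigma(z) = z - 1$, together with the commutation rules $xz = \sigma(z)x = (z-1)x$ and $yz = \sigma^{-1}(z)y = (z+1)y$. Iterating these, I would first establish by an easy induction that $x^p z = (z-p)x^p$ and $y^p z = (z+p)y^p$. In characteristic $p$ we have $p = 0$ in $F$, so these collapse to $x^p z = z x^p$ and $y^p z = z y^p$; thus $x^p$ and $y^p$ commute with $z$, and hence with every polynomial in $\kk[z]$ (here $F[z]$).

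Next I would check commutation of $x^p$ with $x$ and with $y$. Commuting with $x$ is immediate, so the only real content is the bracket $[x^p, y]$. Using $yx = a(z)$ and $xy = \sigma(a) = a(z-1)$, one computes $x^p y$ by moving $y$ leftward past each $x$: each transposition converts an adjacent $xy$ into $a(z-1)$ and picks up a shift of the $\kk[z]$-argument when it passes the remaining $x$'s. The cleanest way to organize this is to use the derivation $\ad(x)$, or equivalently the identity $x^p y - y x^p = \sum_{j} \binom{p}{j}(\cdots)$ type telescoping; but the simplest route is to note directly that the coefficient governing $[x^p, y]$ is a sum over the $p$ positions at which $y$ can be transposed, and this sum is a multiple of $p$ (a Vandermonde/binomial count), hence zero in characteristic $p$. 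Symmetrically, $[y^p, x] = 0$. This establishes that $x^p$ and $y^p$ are central.

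The step I expect to be the main obstacle is the bracket computation $[x^p, y]$ (and its mirror $[y^p, x]$): unlike the commutation with $z$, which collapses instantly once $p=0$ in $F$, the interaction between $x^p$ and $y$ produces a sum of shifted copies of the defining polynomial $a$, and one must verify that the total coefficient is divisible by $p$ rather than merely hoping the leading terms cancel. The honest way to handle this is to recognize $\ad(x)$ as a derivation and observe that $(\ad x)^p$ is again a derivation over $F$ of characteristic $p$ (the Jacobson/Hochschild freshman's-dream phenomenon for $p$-th powers of derivations), so that $[x^p, -] = (\ad x)^p$ acts as a single derivation; evaluating it on the generators $x, y, z$ and finding it is zero on each then forces centrality without a brute-force binomial sum. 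Either bookkeeping works, but the derivation viewpoint makes the characteristic-$p$ vanishing transparent and is what I would write up.
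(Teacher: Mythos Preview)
Your overall strategy---check that $x^p$ and $y^p$ commute with each generator $x,y,z$---matches the paper exactly, and your treatment of $[x^p,z]$ and $[y^p,z]$ via $x^pz=(z-p)x^p$ is identical to what the paper does.

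The one inaccuracy is your informal account of why $[x^p,y]$ vanishes. It is \emph{not} that a sum over $p$ transposition positions is ``a multiple of $p$'' by some binomial/Vandermonde identity. If you actually carry out that sum you get $\sum_{i=0}^{p-1}\sigma^i(\sigma(a)-a)$, which \emph{telescopes} to $\sigma^p(a)-a$; the vanishing is because $\sigma^p=\id$ in characteristic $p$, not a divisibility-by-$p$ phenomenon in the coefficients. The paper makes exactly this observation directly: by a one-line induction it records the closed form $x^ky-yx^k=(\sigma^{k-1}(a)-\sigma^{-1}(a))x^{k-1}$ (equivalently $(\sigma^k(a)-a)x^{k-1}$), and setting $k=p$ kills the coefficient since $\sigma^{p}=\id$. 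The mirror identity handles $[x,y^p]$.

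Your alternative route via $\ad(x^p)=(\ad x)^p$ in characteristic $p$ is valid and gives $(\ad x)^p(y)=(\sigma-1)^p(a)\,x^{p-1}=(\sigma^p-1)(a)\,x^{p-1}=0$, so it does work. But note that the derivation property of $(\ad x)^p$ is not really the point: $[x^p,-]$ is already an inner derivation in any characteristic, so once it vanishes on generators you are done. The genuine content of the freshman's-dream identity here is the computational shortcut $(\sigma-1)^p=\sigma^p-1$, which again reduces to $\sigma^p=\id$. So your second approach is correct but packages the same mechanism with more machinery than the paper's two-line closed form.
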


\begin{proof}
It is clear that $\sigma^p=\id$ and,
moreover, $\sigma^k=\id$ if and only if $p \mid k$. Thus, $[x^p,z] = [y^p,z]=0$.
Since $xy-yx = a-\sigma\inv(a)$, then it follows by induction that
\begin{align*}
	x^ky-yx^k &= (\sigma^{k-1}(a)-\sigma\inv(a))x^{k-1}, \\
	xy^k-y^kx &= (a-\sigma^{-k}(a))y^{k-1}.
\end{align*}
Setting $k=p$ gives $[x^p,y]=[x,y^p]=0$ and the claim holds.
\end{proof}

Let $R=\kk[z][x,y;\sigma,a]$ be a classical GWA and write
\[ a = z^n + c_{n-1}z^{n-1} + \cdots + c_1z + c_0, \quad c_i \in \kk.\]
Set $D=\ZZ[c_0,\hdots,c_{n-1}]$, then it is not difficult to see that
$R_D=D[z][x,y;\sigma,a]$ is again a GWA.
Moreover, $R_D$ is free over $D$ with a basis consisting of the standard monomials and 
\[
R_D = D[z][x,y;\sigma,a] \tensor_D \kk 
= (D \tensor_D \kk)[z][x,y;\sigma,a]
= \kk[z][x,y;\sigma,a] = R.
\]
That is, $R_D$ is an {\it order} of $R$.
Next, we check the conditions of congeniality.
\begin{enumerate}
\item Under the standard filtration, $R$ is a noetherian locally finite filtered algebra with the standard filtration.
\item The algebra $R_D$ is also noetherian locally finite filtered (over $D$)
and the standard filtration on $R$ induces a filtration on $R_D$.
\item It is clear that $\gr R_D$ is an order of $\gr R$.
\item It is well-known that $\gr R_D = D[x,y,z]/(xy-z^n)$ is strongly noetherian
and a locally finite graded algebra over $D$.
\item Let $F$ be a factor ring of $D$ that is a finite field of characteristic $p$.
Then \[ R_D \tensor_D F \iso (D \tensor_D F)[z][x,y;\sigma,a]\] 
and hence $R_D \tensor_D F$ is noetherian.
Moreover, by Lemma \ref{lem.cntp}, it is module finite over the commutative subalgebra
$\kk[x^p,y^p,z,z^2,\hdots,z^{n-1}]$.
\end{enumerate}

We now adapt the methods of \cite{GKMW} to show that the pertinency
condition is satisfied for a classical GWA and a cyclic subgroup of filtered automorphisms.

\begin{lemma}
\label{lem.auslander1}
Let $R=\kk[z][x,y;\sigma,a]$ be a classical GWA.
Set $G=\grp{\Theta_\beta}$ with $\beta$ a primitive $\ell$th root of unity,
$\ell \geq 2$.
Then the Auslander map is an isomorphism for the pair $(R,G)$.
\end{lemma}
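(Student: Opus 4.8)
The plan is to verify the pertinency inequality $\p(R,G)\ge 2$ and then invoke the criterion of \cite{BHZ2,BHZ1} that, for a congenial algebra together with a finite group of filtered automorphisms, $\p(R,G)\ge 2$ forces $\gamma_{R,G}$ to be an isomorphism; congeniality of $R$ was checked above and $G=\grp{\Theta_\beta}$ acts by filtered automorphisms. Following \cite{GKMW}, I would check the pertinency condition after reduction to positive characteristic: by congeniality it is enough to bound $\GKdim(R_F\#G)/(f_G)$ for the reductions $R_F=R_D\tensor_D F$, since $\p(R,G)\ge\p(R_F,G)$. Enlarging $D$ so that it contains $\beta$ and $\ell^{-1}$, I would take a finite residue field $F=D/\mathfrak m$ of characteristic $p$ coprime to $\ell$; then $|G|=\ell$ is invertible in $F$ and $\beta$ reduces to a primitive $\ell$th root of unity, so $G$ still acts on $R_F$ as a cyclic group of order $\ell$.

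The point of the reduction is that $R_F$ is module finite over its center. In characteristic $p$ the elements $x^p,\,y^p$ and $w:=z^p-z$ are central (for instance $x(z^p-z)=(z^p-1)x-(z-1)x=(z^p-z)x$), and an induction giving $y^kx^k=\prod_{i=0}^{k-1}a(z+i)$ yields $y^px^p=\prod_{i=0}^{p-1}a(z+i)=P(w)$ for a monic polynomial $P$ of degree $n=\deg_z a$. Hence, with $u=x^p$ and $v=y^p$, the central subalgebra $F[u,v,w]/(uv-P(w))$ has Krull dimension $2$, $R_F$ is module finite over it, and $\GKdim R_F=2$. The group acts on this subalgebra by $u\mapsto\beta^{p}u$, $v\mapsto\beta^{-p}v$, $w\mapsto w$, with $\beta^{p}$ again a primitive $\ell$th root of unity. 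I would then locate the ramification locus: a closed point of its maximal spectrum can have nontrivial $G$-stabilizer only if $u=v=0$, and the defining relation then forces $P(w)=uv=0$, so the ramification locus is the finite set cut out by $u=v=0$ and $P(w)=0$ and has dimension $0$.

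Finally I would relate $\GKdim(R_F\#G)/(f_G)$ to this locus. As $R_F$ is module finite over its center $\cnt(R_F)$ and $|G|$ is invertible, $R_F\#G$ is module finite over the invariant center $\cnt(R_F)^G$, and $\gamma_{R_F,G}$ is an isomorphism over the unramified locus (there $R_F\#G$ is locally a matrix algebra and $f_G$ a full idempotent). Thus the finite $\cnt(R_F)^G$-module $(R_F\#G)/(f_G)$ is supported inside the image of the ramification locus; since the map from $\Maxspec\cnt(R_F)$ to $\Maxspec$ of the central subalgebra is finite, this support is again $0$-dimensional, so $\GKdim(R_F\#G)/(f_G)=0$ and $\p(R_F,G)=2$. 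Transferring through congeniality gives $\p(R,G)\ge 2$, and hence $\gamma_{R,G}$ is an isomorphism. I expect the main obstacle to be this last step --- establishing, in the manner of \cite{GKMW}, that the cokernel of the Auslander map is supported on the ramification locus so that its Gelfand--Kirillov dimension is governed by that locus; by comparison the identification of the center in characteristic $p$ and the stabilizer computation are routine.
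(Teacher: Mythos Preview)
Your route diverges substantially from the paper's, and the gap is not where you locate it. The paper does not reduce modulo a prime at all in this proof. Instead it passes to the associated graded $S=\gr R\cong\kk[x,y,z]/(yx-z^n)$ and performs a two-line computation in $S\#G$: with $f=f_G=\sum_{i} 1\#\Theta_\beta^i$ one has
\[
xf-f(\beta x)=\sum_{i=0}^{\ell-2}(1-\beta^{i+1})\,x\#\Theta_\beta^i\in(f),
\]
and iterating kills the top group-element term at each step until $x^{\ell-1}\#e\in(f)$; by symmetry $y^{\ell-1}\#e\in(f)$, hence $z^{n(\ell-1)}\#e=(yx)^{\ell-1}\#e\in(f)$. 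Thus $S/((f)\cap S)$ is finite-dimensional, $\p(S,G)=2$, and the filtered-to-graded inequality $\p(R,G)\ge\p(\gr R,G)$ of \cite[Proposition~3.6]{BHZ2} finishes. No geometry, no characteristic-$p$ detour, no appeal to the Azumaya locus.

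The serious gap in your argument is the assertion that ``by congeniality \ldots\ $\p(R,G)\ge\p(R_F,G)$.'' Congeniality, as set up in \cite{BHZ2} and verified just before this lemma, does not furnish such an inequality. The positive-characteristic clause in the definition (item~(5), module-finiteness of $R_D\otimes_D F$ over a commutative subalgebra) is there to supply the homological input behind the equivalence ``$\gamma_{R,G}$ is an isomorphism $\Leftrightarrow \p(R,G)\ge 2$,'' not to transport pertinency bounds from a closed fiber back to the generic one. Nor is this what \cite{GKMW} does: that paper, like the present one, produces explicit elements of $(f_G)$ over the original base. Your ramification-locus calculation, even if completed, yields only that $(R_F\#G)/(f_G)$ is finite-dimensional for a single residue field $F$; lifting this to characteristic zero would require either exhibiting explicit elements of $(f_G)\cap R_D$ over the order $D$, or a semicontinuity argument for the $\GKdim$ of the fibers of the noncommutative family $(R_D\#G)/(f_G)$ over $\mathrm{Spec}\,D$---neither of which you provide. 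So the obstacle you flag (the support argument in characteristic $p$) is secondary; the primary gap is the unjustified transfer step, which the paper's elementary associated-graded computation sidesteps entirely.
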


\begin{proof}
Set $S=\gr(R)$ under the standard filtration.
Then $G$ acts as graded automorphisms on $S$.
We claim first that the theorem holds for the pair $(S,G)$. Define 
\[f = \sum_{i=0}^{\ell-1} 1 \# (\Theta_\beta)^i \in S \# G.\]
Now observe that
\begin{align*}
xf - f(\beta x) 
	= \sum_{i=0}^{\ell-2} (1-\beta^{i+1}) x \# (\Theta_\beta)^i
    \in (f).
\end{align*}
Repeating this process we find that $x^{\ell-1}\# e \in (f)$.
Similarly we can show that $y^{\ell-1}\# e \in (f)$ and so 
\[ y^{\ell-1}x^{\ell-1} = (a_2)^{\ell-1} \in (f).\]
Through the natural embedding $S \hookrightarrow S\# G$ given by $s \mapsto s\#e$,
we have \[\GKdim S \# G /(f) = \GKdim S/ ((f) \cap S).\]
It follows from the above computation that $S/ ((f) \cap S)$ is finite-dimensional 
and so $\p(S,G) = 2$. Thus, the Auslander map is an isomorphism for $(S,G)$ \cite[Theorem 0.2]{BHZ2}.

The action of $\Theta_\beta$ respects the standard filtration on $R$,
both $R$ and $S$ are noetherian, and as $S$ is a commutative complete intersection ring,
it is CM and thus $R$ is CM by \cite[Lemma 4.4]{SZ}.
Hence, $\p(R,G) \geq \p(S,G)=2$ by \cite[Proposition 3.6]{BHZ2} and so
the theorem holds for $(R,G)$ by \cite[Theorem 3.3]{BHZ2}.
\end{proof}

\begin{lemma}
\label{lem.auslander2}
Let $R = \kk[z][x,y;\sigma,a]$ be a classical GWA with $a\in \kk[z]$ reflective,
$\deg_z(a) \geq 3$, and $\beta \in \kk^\times$.
The Auslander map is an isomorphism for the pair $(R,\grp{\Theta_\beta \circ \Omega})$.
\end{lemma}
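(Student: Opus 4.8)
The plan is to follow the proof of Lemma~\ref{lem.auslander1}. Set $S=\gr R=\kk[x,y,z]/(xy-z^n)$ under the standard filtration, where $n=\deg_z(a)$, and let $G=\grp{\Theta_\beta\circ\Omega}$ act on $S$ through the induced graded automorphism $\bar g$. By the analysis in the proof of Theorem~\ref{thm.refgrp}, $G$ is finite, of order $2$ when $n$ is even and $4$ when $n$ is odd, and $\bar g$ has the same order. Since $S$ is a commutative complete intersection of Krull dimension $2$, it is noetherian, PI, and Cohen--Macaulay; thus once we show $\p(S,G)=2$, the Auslander map is an isomorphism for $(S,G)$ by \cite[Theorem 0.2]{BHZ2}. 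The action of $\Theta_\beta\circ\Omega$ respects the standard filtration and $R$ is Cohen--Macaulay by \cite[Lemma 4.4]{SZ}, so $\p(R,G)\geq\p(S,G)=2$ by \cite[Proposition 3.6]{BHZ2}, and the theorem for $(R,G)$ then follows from \cite[Theorem 3.3]{BHZ2}. So it remains to compute $\p(S,G)=2$.

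The difference from Lemma~\ref{lem.auslander1} is that $\bar g$ is not diagonal: on $S$ it is given by $x\mapsto\beta\inv y$, $y\mapsto(-1)^n\beta x$, and $z\mapsto -z$. First I would diagonalize $\bar g$ on $\Span_\kk\{x,y\}$. When $n$ is even the eigenvalues are $\pm1$, with $v_-=-\beta x+y$ an eigenvector of eigenvalue $-1$. When $n$ is odd the eigenvalues are $\pm\zeta$ for a primitive fourth root of unity $\zeta$, with eigenvectors $u=\zeta\beta x+y$ and $w=-\zeta\beta x+y$. Since each such pair is a basis of $\Span_\kk\{x,y\}$, it generates $S$ together with $z$; the eigenvectors singled out below carry eigenvalues that are \emph{primitive} $|G|$-th roots of unity, which is what makes the reduction terminate.

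Next I would apply, on these eigenvectors, the iterative reduction from the proof of Lemma~\ref{lem.auslander1}, with $x$ there replaced by an eigenvector $v$ satisfying $\bar g(v)=\omega v$. Writing $f=\sum_{i=0}^{|G|-1}1\#\bar g^{\,i}$, the element $vf-f(\omega v)=\sum_{i=0}^{|G|-2}(1-\omega^{i+1})v\#\bar g^{\,i}$ lies in $(f)$ and is supported on one fewer group element; when $\omega$ is a primitive $|G|$-th root of unity all coefficients are nonzero, and iterating produces $v^{\,|G|-1}\#e\in(f)$, hence $v^{\,|G|-1}\in(f)\cap S$. For $n$ even this gives $v_-\in(f)\cap S$, and the same computation with $z$ (eigenvalue $-1$) gives $z\in(f)\cap S$; modulo $v_-$ and $z$ the relation $xy=z^n$ collapses to $x^2=0$, so $S/((f)\cap S)$ is a quotient of $\kk[x]/(x^2)$. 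For $n$ odd this gives $u^3,w^3\in(f)\cap S$; rewriting $xy=z^n$ as $u^2-w^2=4\zeta\beta z^n$ and reducing modulo $u^3$ and $w^3$ forces $z^{3n}=0$, so $u,w,z$ are all nilpotent and $S/((f)\cap S)$ is finite-dimensional. In either case, through the embedding $S\hookrightarrow S\#G$ as in Lemma~\ref{lem.auslander1}, $\GKdim\bigl((S\#G)/(f)\bigr)=\GKdim\bigl(S/((f)\cap S)\bigr)=0$, so $\p(S,G)=\GKdim S=2$, as needed.

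The main obstacle is precisely this non-diagonal action: one cannot extract $x^{|G|-1}$ and $y^{|G|-1}$ directly as in Lemma~\ref{lem.auslander1}, and the reduction only terminates for eigenvectors whose eigenvalue is a primitive $|G|$-th root of unity. The delicate step is therefore to verify, separately for each parity of $n$, that the finitely many eigen-powers landing in $(f)\cap S$, together with the defining relation $xy=z^n$, cut $S$ down to a finite-dimensional quotient; for $n$ odd this rests on $\zeta$ being a primitive fourth root of unity (matching $|G|=4$) and on the relation transporting the nilpotence of $u$ and $w$ to $z$.
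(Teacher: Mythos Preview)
Your argument is correct and follows the paper's overall strategy of passing to $S=\gr R$, producing enough of $(f_G)\cap S$ to force a finite-dimensional quotient, and lifting via the results of \cite{BHZ2}. For $n$ even the two proofs essentially coincide: your eigenvector $v_-=-\beta x+y$ is a scalar multiple of the paper's $x-\beta^{-1}y$, and both also extract $z$. For $n$ odd the executions diverge. The paper works with the two-term element $f=1\#e+1\#\phi$ and reads off $x,y\in(f)\cap S$ in a single step; you instead diagonalize to eigenvectors $u,w$ carrying primitive fourth-root eigenvalues, run the Lemma~\ref{lem.auslander1} reduction with the full four-term symmetrizer $f_G$ (since $|G|=4$) to obtain $u^3,w^3\in(f_G)\cap S$, and then deduce $z^{3n}\in(f_G)\cap S$ from the rewritten relation $u^2-w^2=4\zeta\beta z^n$. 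What your route buys is that every element lands directly in the ideal $(f_G)$ used to define pertinency, at the cost of the extra algebraic step transporting nilpotence from $u,w$ to $z$; the paper's shortcut via the two-term $f$ is faster but uses an ideal that \emph{a priori} properly contains $(f_G)$ when $|G|=4$.
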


\begin{proof}
This follows similarly to Theorem \ref{thm.auslander}.
Set $\phi=\Theta_\beta \circ \Omega$ and $H=\grp{\phi}$.
Throughout, let $S=\gr(R)$ and $f = 1\#e + 1\#\phi \in S\#H$.

First we consider the case of $n$ odd. 
We have $xf+f\beta\inv y = (x+\beta\inv y)\#e \in (f)$. On the other hand, $yf-f(\beta x) = (y-\beta x)\#e \in (f)$.
It follows that $x\#e,y\#e \in (f)$.

Next we suppose $n$ is even. Then $xf-f(\beta\inv y) = (x-\beta\inv y)\#e \in (f)$.
Similarly, $(x^2-\beta^{-2} y^2)\#e \in (f)$ and $zf+fz = 2z\#e \in (f)$.
It now follows that $x^2\#e,y^2\#e \in (f)$.

Hence, in either case, we have $S/((f) \cap S)$ is finite-dimensional and so $\p(R,H)\geq \p(S,H)=2$.
\end{proof}

\begin{theorem}
\label{thm.auslander}
Let $R=\kk[z][x,y;\sigma,a]$ be a classical GWA
and let $G$ be a finite nontrivial cyclic subgroup of $\af(R)$.
Then the Auslander map is an isomorphism for the pair $(R,G)$.
\end{theorem}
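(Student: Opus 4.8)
The plan is to reduce every case to one of the two preceding lemmas by invoking the structural classification of $\af(R)$ established in Sections \ref{sec.filt} and the following section. Write $G = \grp{g}$ with $|g| = \ell$, $2 \le \ell < \infty$, and argue according to $\deg_z(a)$.

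First I would dispose of the case $\deg_z(a) = 2$. By Theorem \ref{thm.fixed} the action of $g$ is diagonalizable: there is a presentation of $R$ as a classical GWA $\kk[Z][X,Y;\varsigma,a'(Z)]$ in which $g(X) = \gamma X$, $g(Y) = \gamma\inv Y$, and $g(Z) = Z$. In other words, with respect to this new presentation $g$ is \emph{exactly} the standard diagonal automorphism $\Theta_\gamma$. Since $g^k = \Theta_{\gamma^k}$ and $X \neq 0$, the order of $g$ equals the multiplicative order of $\gamma$, so $\gamma$ is a primitive $\ell$th root of unity. Lemma \ref{lem.auslander1}, applied to this presentation, then gives that the Auslander map is an isomorphism for $(R,G)$. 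The same reduction handles the degenerate low-degree case $R \cong \wa$: Proposition \ref{prop.wa} again diagonalizes $g$ as a map $\Theta_\gamma$ on a GWA presentation of $R$, so Lemma \ref{lem.auslander1} applies verbatim.

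Next I would treat $\deg_z(a) \ge 3$ using the description of $\af(R)$ from Theorem \ref{thm.ngeq3} together with Proposition \ref{prop.gmaps}: every element of $\af(R)$ has the form $\Theta_\beta$, or, when $a$ is reflective, the form $\Theta_\beta \circ \Omega$. Accordingly the generator $g$ of $G$ is one of these two types. If $g = \Theta_\beta$, then finiteness of $G$ forces $\beta$ to be a root of unity, and Lemma \ref{lem.auslander1} applies directly. If $g = \Theta_\beta \circ \Omega$, then $a$ is reflective with $\deg_z(a) \ge 3$, so Lemma \ref{lem.auslander2} applies. Combining the cases yields the theorem.

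I expect the only genuinely delicate point to be the degree-two reduction, specifically the observation that diagonalizability in Theorem \ref{thm.fixed} is not merely a change of coordinates but an identification of $g$ with the standard automorphism $\Theta_\gamma$ of a bona fide classical GWA presentation of $R$. The Auslander map and its pertinency criterion are intrinsic to the pair $(R,G)$, so once this identification is in place, Lemma \ref{lem.auslander1}---whose proof runs through the standard filtration of the given presentation and is insensitive to the choice of generators---transfers without further work. The remaining verifications, namely that $\gamma$ has exact order $\ell$ and that the new presentation is genuinely a classical GWA, are both supplied by Theorem \ref{thm.fixed} and are routine.
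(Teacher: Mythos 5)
Your proof is correct and follows essentially the same route as the paper's: the same trichotomy on $\deg_z(a)$, the same use of Theorem \ref{thm.fixed} to rewrite a degree-two action as $\Theta_\gamma$ on a new classical GWA presentation (your explicit check that $|\gamma|=\ell$, and that the Auslander map is intrinsic to the pair $(R,G)$ rather than to a presentation, is exactly the point the paper leaves implicit), and the same appeal to Lemmas \ref{lem.auslander1} and \ref{lem.auslander2} via Theorem \ref{thm.ngeq3} when $\deg_z(a)>2$. The one genuine divergence is the case $\deg_z(a)=1$: the paper dispatches it by citing Montgomery's theorem \cite[Theorem 2.4]{Mo} (the action of any finite group on $\wa$ is outer, since the units of a classical GWA are just $\kk^\times$), whereas you diagonalize via Proposition \ref{prop.wa} and feed the resulting $\Theta_\gamma$ into the pertinency argument of Lemma \ref{lem.auslander1}. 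Your variant is legitimate: Lemma \ref{lem.auslander1} carries no restriction on $\deg_z(a)$, and its proof goes through when $n=1$ since $\gr R \iso \kk[x,y,z]/(yx-z)$ still yields $x^{\ell-1}\#e$, $y^{\ell-1}\#e$, and hence $z^{\ell-1}\#e$ in $(f_G)$, so $S/((f_G)\cap S)$ is finite-dimensional. What each approach buys: yours gives uniformity, with all three degree cases running through the same pertinency machinery; the paper's citation of Montgomery gives economy, covering the Weyl algebra with no diagonalization or congeniality checks, and it in fact handles arbitrary finite groups (not only cyclic ones) in that case.
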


\begin{proof}
The case $\deg_z(a)=1$ is a consequence of \cite[Theorem 2.4]{Mo}.
If $\deg_z(a)=2$, then we apply the change of basis in Theorem \ref{thm.fixed}
and the result follows from Theorem \ref{lem.auslander1}.
Finally, if $\deg_z(a)>2$, then by Theorem \ref{thm.ngeq3},
$G=\grp{\Theta_\beta}$ or $G=\grp{\Theta_\beta \circ \Omega}$
and so the result follows from Lemma \ref{lem.auslander1} and Lemma \ref{lem.auslander2}.
\end{proof}

We end with a brief remark on the structure of the skew group ring appearing in
the above results.
Let $R=\kk[z][x,y;\sigma,a]$ be a classical GWA and let $G=\grp{\Theta_\beta}$, 
$2 \leq |\beta| < \infty$.
Then $R\# G \iso RG[x,y;\hat{\sigma},\hat{a}]$ where $RG$ is the group algebra of $G$
with coefficients in $R$ and $\hat{\sigma},\hat{a}$ are naturally extended to $RG$ from $R$.
That is, $\hat{a}=a\# e$, and 
$\hat{\sigma}(p \# \Theta_\beta^k) = \beta^{-k}(\sigma(p) \# \Theta_\beta^k)$.
When $\deg_z(a)=2$, one can apply the change of basis in Theorem
\ref{thm.fixed} and achieve the same result for any finite cyclic 
group acting linearly on $R$.
Theorem \ref{thm.auslander} now implies, by way of the Auslander map, that 
the corresponding endomorphism ring has the structure of a GWA .


\bibliographystyle{plain}

\end{document}